\setlist[enumerate,1]{label = \normalfont(\roman*), ref = (\roman*)}
\newtheorem{theorem}{Theorem}[section]
\newtheorem{lemma}[theorem]{Lemma}
\newtheorem{proposition}[theorem]{Proposition}
\newenvironment{assumption}[1]
{\taggedtheoremx}
{\endtaggedtheoremx}
\theoremstyle{definition}
\newtheorem{definition}[theorem]{Definition}
\newtheorem{remark}[theorem]{Remark}
\newtheorem{example}[theorem]{Example}
\newcommand{\R}{\mathbb{R}}
\newcommand{\C}{\mathbb{C}}
\newcommand{\N}{\mathbb{N}}
\renewcommand{\L}{\mathrm{L}}
\newcommand{\W}{\mathrm{W}}
\newcommand{\Cont}{\mathrm{C}}
\newcommand{\e}{\mathrm{e}}
\renewcommand{\d}{\,\mathrm{d}}
\newcommand{\ddt}{\,\frac{\mathrm{d}t}{t}}
\newcommand{\dds}{\,\frac{\mathrm{d}s}{s}}
\newcommand{\ddxt}{\,\frac{\mathrm{d}x \mathrm{d}t}{t}}
\let\rr\r
\renewcommand{\r}{\mathrm{r}}
\newcommand{\len}{\ell}
\newcommand{\eps}{\varepsilon}
\newcommand{\B}{\mathrm{B}}
\newcommand{\Q}{\mathrm{Q}}
\renewcommand{\H}{\mathrm{H}}
\renewcommand\Re{\operatorname{Re}}
\newcommand{\ind}{\mathbf{1}}
\let\ii\i
\DeclareMathOperator{\supp}{supp}
\DeclareMathOperator{\dist}{d}
\DeclareMathOperator{\diam}{diam}
\DeclareMathOperator{\dom}{D}
\newcommand{\cL}{\mathcal{L}}
\newcommand{\AvOp}{E}
\DeclareMathOperator{\loc}{loc}
\DeclareMathOperator{\Id}{Id}
\DeclareMathOperator{\Div}{\underline{div}}
\DeclareMathOperator{\Divv}{div}
\newcommand{\wt}{\widetilde}
\newcommand{\Tr}{\gamma}
\newcommand{\I}{\mathrm{I}}
\newcommand{\II}{\mathrm{II}}
\newcommand{\III}{\mathrm{III}}
\DeclareMathOperator{\sgn}{sgn}
\def\Yint#1{\mathchoice
	{\YYint\displaystyle\textstyle{#1}}%
	{\YYint\textstyle\scriptstyle{#1}}%
	{\YYint\scriptstyle\scriptscriptstyle{#1}}%
	{\YYint\scriptscriptstyle\scriptscriptstyle{#1}}%
	\!\iint}
\def\YYint#1#2#3{{\setbox0=\hbox{$#1{#2#3}{\iint}$}
		\vcenter{\hbox{$#2#3$}}\kern-.51\wd0}}
\def\longdash{{-}\mkern-3.5mu{-}}
\def\fiint{\Yint\longdash}
\title[The Kato square root estimate with Robin boundary conditions]{The Kato square root estimate \\ with Robin boundary conditions}
\author{Sebastian Bechtel}
\address{Université Paris-Saclay, CNRS \\ Laboratoire de Mathématiques d’Orsay \\ 91405 Orsay \\ France}
\email{sebastian.bechtel@universite-paris-saclay.fr}
\author{Andreas Ros\'en}
\address{Mathematical Sciences, Chalmers University of Technology and University of Gothenburg \\ SE-412 96 G{\"o}teborg, Sweden}
\email{andreas.rosen@chalmers.se}
\thanks{Andreas Ros\'en was supported by the Swedish Research Council (Grant 2022-03996), which also funded the short visit by Sebastian Bechtel to Gothenburg during which this work was initiated. This project has received funding from the European Union’s Horizon 2020 research and innovation programme under the Marie Skłodowska-Curie grant agreement No 101034255 \euflag{}.}
\subjclass[2020]{Primary: 42B37, 35J25. Secondary: 46E35, 47A60.}
\date{\today}
\dedicatory{}
\keywords{Robin boundary conditions, Kato square root estimates, quadratic estimates, locally uniform domains, boundary traces}
\begin{document}
	\begin{abstract}

		We prove the Kato square root estimate for second-order divergence form elliptic operators $-\Divv(A\nabla)$ on a bounded, locally uniform domain $D \subseteq \mathbb{R}^n$, for accretive coefficients $A \in \L^\infty(D; \mathbb{C}^n)$, under the Robin boundary condition $\nu \cdot A\nabla u + bu = 0$ for a (possibly unbounded) boundary conductivity $b$. In contrast to essentially all previous estimates of Kato square root operators, no first-order approach seems possible for the Robin boundary conditions.

	\end{abstract}
	\maketitle

	\section{Introduction}
	\label{sec:intro}

	This paper concerns estimates for the Kato square root $\sqrt{L}$ of divergence form second-order elliptic operators $L = -\Divv(A\nabla)$. The estimate
	\begin{align}
		\label{eq:intro_kato_estimate}
		\| \sqrt{L} u \|_{\L^2(\R^n)} \approx \| \nabla u \|_{\L^2(\R^n)}
	\end{align}
	conjectured by Kato~\cite{Kat61}, was eventually proved in the seminal work~\cite{Kato-Square-Root-Proof}. Following this breakthrough, there has been many generalizations, for example to domains $D \subseteq \R^n$, imposing Dirichlet, Neumann or mixed such boundary conditions, to manifolds, to degenerate/unbounded coefficients $A$, to more general elliptic systems, and to parabolic operators. See for example~\cite{Morris,AKM,Bandara_McIntosh,ARR,AMR,Parabolic,Parabolic_SecondOrder,Stokes,Cruz_Rios,Brocchi_Rosen,BEH}.

	Although the estimate~\eqref{eq:intro_kato_estimate} seems to concern second-order PDEs, it can be argued that it actually concerns first-order elliptic systems. Indeed,~\eqref{eq:intro_kato_estimate} readily follows from the boundedness of a generalized Cauchy singular integral operator $\sgn(BD)$, where
	\begin{align}
		B = \begin{bmatrix} 1 & 0 \\ 0 & A \end{bmatrix} \quad \text{and} \quad D = \begin{bmatrix} 0 & -\Divv \\ \nabla & 0 \end{bmatrix}.
	\end{align}
	In dimension $n = 1$ this observation goes back to~\cite{AMcN}, and after the Kato square root estimate~\eqref{eq:intro_kato_estimate} was established in full generality in~\cite{Kato-Square-Root-Proof}, the framework from~\cite{AMcN} was generalized in~\cite{AKM-QuadraticEstimates} to arbitrary dimension. See~\cite{AAM-ArkMath} for a simplified presentation of this first-order approach. Although the generalized first-order Cauchy--Riemann system $BD$ looks more complicated than the second-order operator $L$, with essentially the same techniques one can more generally prove bounds
	\begin{align}
		\| f(BD) u \|_{\L^2} \lesssim \| f \|_\infty \| u \|_{\L^2}
	\end{align}
	for all operators $f(BD)$ in the holomorphic functional calculus of $BD$. This has important applications for example to the study of boundary value problems. See~\cite{AAM-ArkMath,AMR,AA-Inventiones}.

	In all previous works on Kato square root estimates, to the best of our knowledge (and possible with the exception of~\cite{Stokes}),
	it has been possible either to do a second-order proof (roughly following the original proof~\cite{Kato-Square-Root-Proof}), or to do a first-order proof (roughly following~\cite{AKM-QuadraticEstimates} or~\cite{AAM-ArkMath}).
	Also for Kato square root estimates on domains, both choices have been available~\cite{AKM,ISEM_Kato,Darmstadt-KatoMixedBoundary}.
	For Dirichlet boundary conditions one uses
	\begin{align}
		D = \begin{bmatrix} 0 & -\Divv \\ \nabla_0 & 0 \end{bmatrix},
	\end{align}
	where $\dom(\nabla_0) = \W^{1,2}_0(D)$ and $-\Divv = \nabla_0^*$. For Neumann boundary conditions, one uses
	\begin{align}
		D = \begin{bmatrix} 0 & -\Divv_0 \\ \nabla & 0 \end{bmatrix},
	\end{align}
	where $\dom(\nabla) = \W^{1,2}(D)$ and $-\Divv_0 = \nabla^*$. If the domain $D$ is sufficiently regular, one finds $\dom(\Divv_0) = \{ f \in \L^2(D) \colon \Divv(f) \in \L^2(D), \nu \cdot f|_{\partial D} = 0 \}$, where $\nu$ denotes the outwards normal to $D$. Even for mixed Dirichlet/Neumann boundary conditions one can define a self-adjoint operator $D$ by putting the Dirichlet boundary conditions on $\nabla$ and the Neumann boundary conditions on $\Divv$.

	In the present paper, we prove the Kato square root estimate on domains $D \subseteq \R^n$ under Robin boundary conditions (formally) given by
	\begin{align}
		\nu \cdot A \nabla u + b u = 0 \quad \text{on } \partial D.
	\end{align}
	We recall that when Fourier's law is used to model heat equilibrium by $\Divv(A\nabla u) = 0$, then $A$ is the conductivity in $D$,
	while $b \geq 0$ is the conductivity across the boundary $\partial D$. The extreme case $b = 0$ means that $D$ is an insulated body and corresponds to Neumann boundary conditions, while the limit $b \to \infty$ means that $u|_{\partial D}$ must equal the outside temperature $0$.

	Unlike for previous works, for the Robin boundary conditions we are forced to do a second-order proof of the Kato square root estimate. Indeed, it is not clear to us even how to define a first-order self-adjoint operator $D$ which is relevant for the Robin boundary conditions. Also our second-order proof here presents new challenges since the Robin form
	\begin{align}
		\label{eq:intro_form}
		a(u,v) =  \iint_D A\nabla u \cdot \overline{\nabla v} \d x + \int_{\partial D} b \gamma(u) \overline{\gamma(v)} \d \sigma
	\end{align}
	contains a boundary term that needs to be converted to a boundary condition for the Robin operator $L$.
	At the technical level, a main novelty in the proof is Lemma~\ref{lem:volume}, which allows us to reduce the Kato square root estimate to a square function estimate for a family of operators acting only on functions in the interior $D$. Our main result is Theorem~\ref{thm:main}, where the Robin boundary conditions $\nu \cdot A\nabla u + bu = 0$ are imposed in a weak sense on a subset $\Gamma \subseteq \partial D$ of dimension $d$, $n-2 < d < n$, of the boundary, and Neumann boundary conditions on the remainder of $\partial D$.

	\section{Geometry and function spaces}
	\label{sec:geometry}

	Fix the dimension $n \geq 2$. We work in an open, bounded and possibly non-connected set $D \subseteq \R^n$. Fix a (relatively) open, non-empty subset $\Gamma \subseteq \partial D$.
	Throughout, we distinguish integrals in the interior and on the boundary by writing $\iint_D \d x$ and $\int_\Gamma \d \sigma$. To save space, we occasionally drop $\d x$ and $\d \sigma$ from the notation.

	We introduce some notions from geometric measure theory in order to ensure some crucial properties for $D$ and $\Gamma$ as well as for function spaces on them.

	\begin{definition}[Locally uniform]
		\label{def:locally_uniform}
		Call $D$ locally uniform, if there exist $\eps \in (0,1]$, $\delta \in (0,\infty]$ and $c \in (0, \infty]$ such that the following hold:
		\begin{enumerate}
			\item[(i)] All points $x, y \in D$ with $|x-y| < \delta$ can be joined in $D$ by an \emph{$\eps$-cigar}, that is to say, a rectifiable curve $c \subseteq D$ of length $\len(c) \leq \eps^{-1} |x-y|$ such that
			\begin{align}
				\dist(z, \partial D) \geq \frac{\eps|z-x||z-y|}{|x-y|}, \quad z \in c.
			\end{align}
			\item[(ii)] All connected components $D'$ of $D$ satisfy $\diam(D') \geq c$.
		\end{enumerate}
	\end{definition}

	\begin{remark}
		When $D$ is connected, the definition coincides with the one of Jones~\cite{Jones}. He has studied properties of Sobolev spaces on $D$, including extension and density results. The non-connected case is a by-product of~\cite{BHT}.
	\end{remark}

	If a set $D \subseteq \R^n$ is locally uniform, then it is in particular interior thick in the sense of the following definition, see~\cite[Prop.~2.9]{BEH}. Interior thickness will be important to ensure standard properties of function spaces on $D$ on the one hand, and to ensure the existence of a dyadic structure on $D$ on the other hand, see Section~\ref{subsec:dyadic} for the latter.

	\begin{definition}[Interior thickness]
		\label{def:ITC}
		Call $O$ interior thick if the following holds true:
		\begin{align}
			\tag{ITC}
			\label{eq:ITC}
			\exists c \in (0,1] \colon \; \forall x \in D, r \in (0,1]\colon \quad |D \cap \B(x,r)| \geq c |\B(x,r)|.
		\end{align}
	\end{definition}

	For the boundary part $\Gamma$, we introduce the following.

	\begin{definition}[$d$-set]
		Call $\Gamma$ a $d$-set for $d \in (0,n]$ if the following holds true:
		\begin{align}
			\label{eq:l_set}
			\exists c \in (0,1] \colon \; \forall x \in \Gamma, r \in (0,1]\colon \quad c r^d \leq \mathcal{H}^d(\Gamma \cap \B(x,r)) \leq c^{-1} r^d.
		\end{align}
	\end{definition}

	Throughout the article, we impose the following geometric setting.

	\begin{assumption}{D}
		\label{ass:D}
		The pair $(D,\Gamma)$ is supposed to satisfy the following:
		\begin{enumerate}
			\item The open set $D$ is supposed to be locally uniform in the sense of Definition~\ref{def:locally_uniform}.
			\item The set $\Gamma$ is an $d$-set for some $d \in (n-2,n)$.
		\end{enumerate}
	\end{assumption}

	It ensures properties of function spaces on $D$ as well as the existence and boundedness of trace operator.

	The space $\W^{1,2}(D)$ used in the formulation of the Kato square root estimate is the usual Sobolev space of $\L^2(D)$ functions whose distributional gradient belongs again to $\L^2(D)$. We introduce further spaces of \emph{Bessel potential} type in the subsequent definition.
	To this end, recall the scale of Bessel potential spaces $\H^{t,r}(\R^n)$ on $\R^n$.

	\begin{definition}
		\label{def:Htp}
		Let $t\in \R$ and $r\in (1,\infty)$. First, if $t \geq 0$ define the space $\H^{t,r}(D)$ by $$\H^{t,r}(D) \coloneqq \{ u|_D \colon u \in \H^{t,r}(\R^n) \}$$ and equip it with the quotient norm. Here, the operation $|_D$ is the distributional restriction $\R^n \to D$. Second, if $t < 0$, define the space $\H^{t,r}(D)$ as the topological anti-dual space\footnote{That is, the space of conjugate-linear bounded functionals.} of $\H^{-t, r'}(D)$.
	\end{definition}

	By construction, $\H^{t,r}(D)$ is a Banach space (Hilbertian if $r=2$), $\H^{t,r}(D) \cap \Cont(\overline{D})$ is dense in $\H^{t,r}(D)$ for all $t \in \R$ and $r\in (1,\infty)$, and the usual Sobolev embeddings apply.
	If $t_1 > t_0$, then $\H^{t_1,r}(D) \subseteq \H^{t_0,r}(D)$ with dense inclusion.
	Moreover, as $D$ satisfies the interior thickness condition, the spaces $\H^{t,r}(D)$ obey the same interpolation rules as in the case $D = \R^n$, see~\cite[Prop.~3.8]{BE}.

	Since $D$ is locally uniform, it is a Sobolev extension domain~\cite{Jones,BHT}. This leads to compatibility between the $\W$ and $\H$ spaces.

	\begin{lemma}
		\label{lem:H=W}
		For $r \in (1,\infty)$, the spaces $\W^{1,r}(D)$ and $\H^{1,r}(D)$ coincide with equivalence of norms.
	\end{lemma}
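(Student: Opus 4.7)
The plan is to reduce the statement to the known identity $\H^{1,r}(\R^n) = \W^{1,r}(\R^n)$ on the whole space and then sandwich $D$ between restriction on one side and a Sobolev extension operator on the other.

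First, I would recall that on all of $\R^n$ the Bessel potential space of integer order coincides with the usual Sobolev space, $\H^{1,r}(\R^n) = \W^{1,r}(\R^n)$ with equivalence of norms, a classical consequence of the Mikhlin multiplier theorem applied to the symbols of $\partial_j(1-\Delta)^{-1/2}$ for $r \in (1,\infty)$. This lets me rewrite the definition of $\H^{1,r}(D)$ as the space of restrictions of $\W^{1,r}(\R^n)$ functions to $D$, endowed with the quotient norm.

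For the inclusion $\H^{1,r}(D) \hookrightarrow \W^{1,r}(D)$, given $v \in \H^{1,r}(D)$ with extension $u \in \W^{1,r}(\R^n)$, the distributional restriction $u|_D$ lies in $\W^{1,r}(D)$ with $\|v\|_{\W^{1,r}(D)} \leq \|u\|_{\W^{1,r}(\R^n)}$; taking the infimum over all extensions yields the continuous embedding. For the reverse inclusion $\W^{1,r}(D) \hookrightarrow \H^{1,r}(D)$, I invoke the hypothesis that $D$ is locally uniform: by the extension theorem of Jones~\cite{Jones} in the connected case and its generalization in~\cite{BHT} to the (possibly non-connected) setting of Definition~\ref{def:locally_uniform}, there exists a bounded extension operator $E \colon \W^{1,r}(D) \to \W^{1,r}(\R^n)$ for every $r \in (1,\infty)$. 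For $v \in \W^{1,r}(D)$, the extension $Ev \in \W^{1,r}(\R^n) = \H^{1,r}(\R^n)$ shows $v \in \H^{1,r}(D)$ with $\|v\|_{\H^{1,r}(D)} \leq \|Ev\|_{\H^{1,r}(\R^n)} \lesssim \|v\|_{\W^{1,r}(D)}$.

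There is no real obstacle here; the only point that deserves care is quoting the correct extension result, since $D$ may be disconnected and $\W^{1,r}(D)$ must be interpreted componentwise. The second clause of Definition~\ref{def:locally_uniform}, enforcing a uniform lower bound on diameters of components, is exactly what makes the construction of~\cite{BHT} furnish a single bounded extension operator, which is what closes the argument.
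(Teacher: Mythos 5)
Your proposal is correct and follows essentially the same route as the paper: reduce to $\H^{1,r}(\R^n)=\W^{1,r}(\R^n)$, use distributional restriction with the quotient norm for $\H^{1,r}(D)\hookrightarrow\W^{1,r}(D)$, and invoke the Jones/BHT extension operator for the reverse inclusion. The extra comment on why clause (ii) of Definition~\ref{def:locally_uniform} is needed to obtain a single bounded extension operator in the disconnected case is a nice touch but not a deviation from the paper's argument.
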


	\begin{proof}
		Recall $\H^{1,r}(\R^n) = \W^{1,r}(\R^n)$.
		First, let $u \in \W^{1,r}(D)$ and write $\mathcal{E}$ for the bounded extension operator $\W^{1,r}(D) \to \W^{1,r}(\R^n)$ introduced in~\cite{BHT}. Then
		\begin{align}
			\| u \|_{\H^{1,r}(D)} \leq \| \mathcal{E} u \|_{\H^{1,r}(\R^n)} = \| \mathcal{E} u \|_{\W^{1,r}(\R^n)} \lesssim \| u \|_{\W^{1,r}(D)},
		\end{align}
		which gives the inclusion $\W^{1,r}(D) \subseteq \H^{1,r}(D)$. Second, let $u \in \H^{1,r}(D)$ and let $U \in \H^{1,r}(\R^n)$ be any function with $U|_D = u$. Calculate
		\begin{align}
			\| u \|_{\W^{1,r}(D)} \leq \| U \|_{\W^{1,r}(\R^n)} = \| U \|_{\H^{1,r}(\R^n)}.
		\end{align}
		Taking the infimum over all extensions $U$ of $u$ yields $\| u \|_{\W^{1,r}(D)} \leq \| u \|_{\H^{1,r}(D)}$, which gives the converse inclusion $\H^{1,r}(D) \subseteq \W^{1,r}(D)$.
	\end{proof}

	As for the boundary, for $r \in [1,\infty]$ put $\L^r(\Gamma) \coloneqq \L^r(\Gamma, \sigma)$, where $\sigma$ denotes the $d$-dimensional Hausdorff measure. Note that $\Gamma$ is has positive and finite $\sigma$-measure because it is an non-empty bounded $d$-set.

	Boundedness of the trace operator in our geometric setting was investigated in~\cite[Sec.~2]{BER}. The following result follows by an argument in the spirit of~\cite[Thm.~2.13 \& Thm.~2.16]{BER} the details are left to the reader.

	\begin{proposition}[Trace operator]
		\label{prop:trace}
		Let $r \in [2, \tfrac{2d}{n-2})$ and $s \in (\tfrac{n}{2} - \tfrac{d}{r}, 1]$.
		Then, the trace operator $\Tr$ onto $\Gamma$, originally defined on $\H^{s,2}(D) \cap \Cont(\overline{D})$, extends by density to a bounded operator $\H^{s,2}(D) \to \L^r(\Gamma)$.
	\end{proposition}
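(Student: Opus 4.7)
The plan is to pass from $D$ to $\R^n$ via the restriction definition of $\H^{s,2}(D)$, and then to combine the Jonsson--Wallin trace theorem for $d$-sets with a Sobolev-type embedding on $\Gamma$. Concretely, since $\H^{s,2}(D)$ is by construction the quotient of $\H^{s,2}(\R^n)$ under distributional restriction, and since $\H^{s,2}(D) \cap \Cont(\overline{D})$ is dense in $\H^{s,2}(D)$, it suffices to show the global estimate
\begin{align}
\| \Tr U \|_{\L^r(\Gamma)} \lesssim \| U \|_{\H^{s,2}(\R^n)}, \qquad U \in \H^{s,2}(\R^n),
\end{align}
with $\Tr U$ interpreted as the $\sigma$-a.e.\ restriction of $U$ to $\Gamma$ (well-defined via Lebesgue-point arguments at the relevant level of regularity). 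Taking the infimum over extensions of a given $u \in \H^{s,2}(D) \cap \Cont(\overline{D})$ and invoking density then yields the full claim.

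For the global estimate, first I would invoke the Jonsson--Wallin trace theorem on $d$-sets: whenever $s > (n-d)/2$, the trace extends to a bounded operator $\H^{s,2}(\R^n) \to B^{t}_{2,2}(\Gamma)$ with smoothness index $t = s - (n-d)/2 > 0$, the target being the intrinsic Besov space on the $d$-set $\Gamma$. Our hypothesis $s > n/2 - d/r$ combined with $r \geq 2$ yields $s > (n-d)/2$. I would then chain this with the Sobolev-type embedding $B^{t}_{2,2}(\Gamma) \hookrightarrow \L^r(\Gamma)$ on the $d$-set, which holds whenever $t > d\bigl(\tfrac{1}{2} - \tfrac{1}{r}\bigr)$. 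Rewritten in terms of $s$, this condition is exactly $s > n/2 - d/r$, matching the hypothesis. The upper bound $r < 2d/(n-2)$ is needed precisely to keep the embedding subcritical at the endpoint $s = 1$, ensuring $t < d/2$.

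The main technical difficulty is that both Jonsson--Wallin ingredients---the trace theorem and the Besov embedding---must be justified on a possibly rather irregular $d$-set $\Gamma$, which relies on atomic/Whitney decompositions tailored to $d$-sets. An alternative, and the route I would favor for a self-contained write-up matching the spirit of~\cite{BER}, is to bypass the intermediate Besov space and prove $\H^{s,2}(\R^n) \to \L^r(\Gamma)$ directly via the Bessel potential representation of $U$, a Whitney decomposition of $\R^n \setminus \Gamma$, and estimation of pairings against $\L^{r'}(\Gamma)$-atoms. This keeps all harmonic analysis on $\R^n$ and uses only the upper/lower $d$-regularity of $\Gamma$ through standard covering estimates. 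The bookkeeping is notationally heavy but essentially routine, and I would expect the argument to follow closely that of the cited statements in~\cite[Sec.~2]{BER}.
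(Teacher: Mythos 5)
The paper gives no in-text proof of Proposition~\ref{prop:trace}; it defers entirely to \cite[Sec.~2]{BER}, so there is no argument to compare against line by line. Your route --- pass to $\R^n$ via the quotient definition of $\H^{s,2}(D)$, then compose the Jonsson--Wallin trace $\H^{s,2}(\R^n) \to B^{t}_{2,2}(\Gamma)$ with smoothness $t = s - (n-d)/2 > 0$ (available since $s > n/2 - d/r \geq (n-d)/2$ when $r \geq 2$) with the Besov--Sobolev embedding $B^{t}_{2,2}(\Gamma) \hookrightarrow \L^r(\Gamma)$ on the $d$-set $\Gamma$ (valid for $t > d(1/2 - 1/r)$) --- is exactly the expected argument for a statement of this kind, and your bookkeeping translating those two thresholds into $s > n/2 - d/r$ together with the upper restriction $r < 2d/(n-2)$ (needed so that $s=1$ remains admissible) is correct.

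One step is asserted but not actually argued and should not be left implicit. When you pass from $u \in \H^{s,2}(D) \cap \Cont(\overline{D})$ to an extension $U \in \H^{s,2}(\R^n)$ and then ``take the infimum over extensions,'' you are tacitly using that the Lebesgue-point trace $\Tr U$ on $\Gamma$ coincides with the pointwise restriction $u|_\Gamma$. This is not automatic: $U$ outside $D$ is uncontrolled, and $\Gamma \subseteq \partial D$, so the averages defining $\Tr U$ a priori see both sides. The identification is nevertheless true in this setting, because $D$ is interior thick (a consequence of local uniformity, as the paper records): if $W \in \H^{s,2}(\R^n)$ vanishes a.e.\ on $D$, then for $\sigma$-a.e.\ $x \in \Gamma$ (the exceptional set has Bessel $(s,2)$-capacity zero, hence Hausdorff dimension at most $n-2s < d$, hence $\sigma$-measure zero) one has
\begin{align}
c\,|\Tr W(x)| \;\le\; \frac{|\B(x,r)\cap D|}{|\B(x,r)|}\,|\Tr W(x)| \;=\; \frac{1}{|\B(x,r)|}\int_{\B(x,r)\cap D} |W - \Tr W(x)| \;\le\; \fint_{\B(x,r)} |W - \Tr W(x)| \;\xrightarrow[r\to 0]{}\; 0,
\end{align}
which forces $\Tr W(x) = 0$. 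Thus the Jonsson--Wallin trace is extension-independent, and comparing with a continuous extension of $u$ gives $\Tr U = u|_\Gamma$. Alternatively, one can invoke a Whitney/Jones-type extension operator that preserves continuity up to $\partial D$. Either fix is short, but without it the ``infimum over extensions'' does not by itself yield the claimed bound for the originally defined trace.
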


	Note that, owing to Lemma~\ref{lem:H=W}, $\W^{1,2}(D) \subseteq \H^{s,2}(D)$ continuously.

	\section{Elliptic operators with Robin boundary conditions}
	\label{sec:elliptic}

	We define elliptic operators with Robin boundary conditions and collect some important properties of them.
	Also, we state our main result in Theorem~\ref{thm:main}.
	Let $A \in \L^\infty(D; \C^{n\times n})$ and $b \in \L^{\tilde q}(\Gamma)$.
	It is possible to include lower-order terms, but we stick to the pure second-order case for the sake of readability.
	Note that, using $d > n-2$ and $n \geq 2$, the denominator $d + 2 - n$ in the definition of $\tilde q$ is strictly positive and $\tilde q > 1$ .

	The following lemma relates $\tilde q$ to boundedness properties of the trace operator.

	\begin{lemma}
		\label{lem:parameter}
		There are $q \in [2,\infty)$ and $s_*\in (0,1)$ satisfying
		\begin{align}
			1 = \frac{1}{\tilde q} + \frac{2}{q}, \qquad s_* = \frac{n}{2} - \frac{d}{q},
		\end{align}
		for which the trace operator $\Tr$ from Proposition~\ref{prop:trace} is $\H^{s,2}(D) \to \L^q(\Gamma)$ bounded for all $s \in (s_*, 1]$.
	\end{lemma}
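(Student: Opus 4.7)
The plan is to explicitly solve for $q$ and $s_*$ from the two scalar equations and then verify that the resulting values fall inside the admissible range of Proposition~\ref{prop:trace}. There is essentially no analysis involved; the content is purely algebraic, together with the standing hypothesis on $\tilde q$ (which, according to the sentence preceding the lemma, forces $\tilde q$ to be larger than some critical value involving $d+2-n$ in the denominator).

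First I would rearrange $1=\tfrac{1}{\tilde q}+\tfrac{2}{q}$ to get
\begin{align}
q = \frac{2\tilde q}{\tilde q-1},
\end{align}
which is well defined since $\tilde q>1$. Because $\tilde q/(\tilde q-1)>1$, one has $q>2$, so in particular $q\in[2,\infty)$. To apply Proposition~\ref{prop:trace}, I must also check $q<\tfrac{2d}{n-2}$. A direct manipulation shows that this upper bound on $q$ is equivalent to $\tilde q>\tfrac{d}{d+2-n}$. This last inequality is exactly the standing hypothesis on $\tilde q$ phrased through the denominator $d+2-n$, which is strictly positive since $d>n-2$. Thus $q\in[2,\tfrac{2d}{n-2})$, the admissible range of $r$ in Proposition~\ref{prop:trace}.

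Next I set $s_*\coloneqq\tfrac{n}{2}-\tfrac{d}{q}$ and verify $s_*\in(0,1)$. The upper bound $q<\tfrac{2d}{n-2}$ yields $\tfrac{d}{q}>\tfrac{n-2}{2}$ and hence $s_*<1$. For the lower bound, since $d<n$ and $q>2$, one obtains $\tfrac{d}{q}<\tfrac{d}{2}<\tfrac{n}{2}$, i.e.\ $s_*>0$. With these values of $q$ and $s_*$, the condition $s\in(s_*,1]$ is exactly the condition $s\in(\tfrac{n}{2}-\tfrac{d}{q},1]$ required by Proposition~\ref{prop:trace} with $r=q$, so the trace operator $\Tr\colon \H^{s,2}(D)\to \L^q(\Gamma)$ is bounded on the stated range.

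The only "step" that needs any care is the equivalence between $q<\tfrac{2d}{n-2}$ and the hypothesis on $\tilde q$; once this arithmetic identity is observed, the lemma follows immediately from Proposition~\ref{prop:trace}. I do not anticipate any genuine technical obstacle.
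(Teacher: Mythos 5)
Your proposal is correct and follows essentially the same route as the paper: solve the two scalar equations for $q$ and $s_*$, verify $q$ lies in the admissible range of Proposition~\ref{prop:trace}, and verify $s_* \in (0,1)$; the paper just carries out these algebraic checks in terms of $1/q$ and $1/\tilde q$ rather than by solving for $q$ explicitly. One tiny imprecision: from $\tilde q \in (\tfrac{d}{d+2-n},\infty]$ you can only conclude $q \geq 2$, not $q > 2$, since $\tilde q = \infty$ gives $q = 2$; this does not affect anything (your lower bound $s_* > 0$ still goes through with $\tfrac{d}{q} \leq \tfrac{d}{2} < \tfrac{n}{2}$), but the phrasing should be adjusted.
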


	\begin{proof}
		Define $q$ and $s_*$ via the relations in the statement.
		Since
		\begin{align}
			\label{eq:parameters}
			\frac{n}{2} - \frac{d}{q} = \frac{n}{2} - \frac{d}{2}\Bigl(1-\frac{1}{\tilde q}\Bigr) < \frac{n}{2} - \frac{d}{2} \frac{n-2}{d} = 1
		\end{align}
		on the one hand, and
		\begin{align}
			\frac{n}{2} - \frac{d}{q} = \frac{n}{2} - \frac{d}{2} + \frac{d}{2\tilde q} \geq \frac{n-d}{2} > 0
		\end{align}
		on the other hand. Hence, $s_* \in (0,1)$.
		For the boundedness of the trace operator, we apply Proposition~\ref{prop:trace} with $r \coloneqq q$ and $s \in (s_*, 1]$.
		Indeed, $s$ is admissible by definition, $q \geq 2$ follows from $\tilde q \leq \infty$, and we have $\frac{1}{q} > \frac{n-2}{2d}$ by rearranging the terms in~\eqref{eq:parameters}, which concludes the proof.
	\end{proof}

	Throughout, fix some $s\in (s_*, 1)$. Note that the interval $(s_*, 1)$ is non-empty by virtue of Lemma~\ref{lem:parameter}.

	Define the sesquilinear form $a$ on $\W^{1,2}(D)$ by
	\begin{align}
		\label{eq:a}
		\tag{a}
		a(u,v) \coloneqq \iint_D A \nabla u \cdot \overline{\nabla v} \d x + \int_\Gamma b \Tr(u) \cdot \overline{\Tr(v)} \d \sigma \qquad (u,v \in \W^{1,2}(D)).
	\end{align}
	Recall that $\sigma$ denotes the $d$-dimensional Hausdorff measure.
	\begin{lemma}
		The form $a \colon \W^{1,2}(D) \times \W^{1,2}(D) \to \C$ is bounded.
	\end{lemma}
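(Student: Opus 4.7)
The plan is to split the form into its two constituent terms and bound each separately.

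For the volume term, a direct application of Cauchy--Schwarz yields
\begin{align}
\Bigl| \iint_D A \nabla u \cdot \overline{\nabla v} \d x \Bigr| \leq \| A \|_{\L^\infty(D)} \| \nabla u \|_{\L^2(D)} \| \nabla v \|_{\L^2(D)} \leq \| A \|_{\L^\infty(D)} \| u \|_{\W^{1,2}(D)} \| v \|_{\W^{1,2}(D)}.
\end{align}

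For the boundary term, the idea is to use Hölder's inequality dictated by the exponent relation $\frac{1}{\tilde q} + \frac{2}{q} = 1$ from Lemma~\ref{lem:parameter} to obtain
\begin{align}
\Bigl| \int_\Gamma b \Tr(u) \overline{\Tr(v)} \d \sigma \Bigr| \leq \| b \|_{\L^{\tilde q}(\Gamma)} \| \Tr(u) \|_{\L^q(\Gamma)} \| \Tr(v) \|_{\L^q(\Gamma)},
\end{align}
and then to promote the $\L^q(\Gamma)$ norms of the traces to $\W^{1,2}(D)$ norms using the trace estimate. Since $s \in (s_*, 1)$ was fixed, Lemma~\ref{lem:parameter} provides boundedness $\Tr \colon \H^{s,2}(D) \to \L^q(\Gamma)$. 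Combining this with the continuous inclusion $\W^{1,2}(D) = \H^{1,2}(D) \hookrightarrow \H^{s,2}(D)$ (which follows from Lemma~\ref{lem:H=W} and the Sobolev embedding recalled after Definition~\ref{def:Htp}) gives $\| \Tr(w) \|_{\L^q(\Gamma)} \lesssim \| w \|_{\W^{1,2}(D)}$ for all $w \in \W^{1,2}(D)$.

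Adding the two estimates produces the desired bound. I do not expect any genuine obstacle: every ingredient (Hölder, the trace bound from Lemma~\ref{lem:parameter}, and the continuous embedding $\W^{1,2}(D) \hookrightarrow \H^{s,2}(D)$) is already available, and the only mild subtlety is to check that the parameters have been set up so that the Hölder triple $(\tilde q, q, q)$ matches exactly the range in which the trace operator is bounded; this was the very purpose of Lemma~\ref{lem:parameter}.
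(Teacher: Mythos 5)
Your proposal is correct and follows essentially the same route as the paper: split off the volume term (handled by Cauchy--Schwarz and $\|A\|_\infty$), then apply H\"older's inequality with the triple $(\tilde q, q, q)$ to the boundary term, the trace bound from Lemma~\ref{lem:parameter}, and the embedding $\W^{1,2}(D) \hookrightarrow \H^{s,2}(D)$.
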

	\begin{proof}
		It suffices to consider the boundary term. In accordance with Lemma~\ref{lem:parameter}, we use Hölder's inequality and boundedness of the trace operator, to get
		\begin{align}
			\Bigl| \int_\Gamma b \Tr(u) \cdot \overline{\Tr(v)} \d \sigma \Bigr| \leq \| b \|_{\L^{\tilde q}(\Gamma)} \| \Tr(u) \|_{\L^q(\Gamma)} \| \Tr(v) \|_{\L^q(\Gamma)} \lesssim \| b \|_{\L^{\tilde q}(\Gamma)} \| u \|_{\H^{s,2}(D)} \| u \|_{\H^{s,2}(D)}.
		\end{align}
		By virtue of the embedding $\W^{1,2}(D) \subseteq \H^{s,2}(D)$, this completes the proof.
	\end{proof}
	To ensure ellipticity, we assume for some $\lambda > 0$ the inhomogeneous G\rr{a}rding inequality
	\begin{align}
		\label{eq:garding}
		\tag{E}
		\Re a(u,u) \geq \lambda \| u \|_{\W^{1,2}(D)}^2, \qquad u \in \W^{1,2}(D).
	\end{align}
	\begin{example}
		If $A \colon D \to \C^{n\times n}$ are elliptic coefficients and $b \colon \Gamma \to (0,\infty)$, then~\eqref{eq:garding} holds. The argument is similar to Step~1 of the proof of Proposition~\ref{prop:coercivity}. Details are left to the reader.
	\end{example}

	We define first a weak formulation of an elliptic operator with Robin boundary conditions.

	\begin{definition}[Elliptic operator with Robin BC -- weak formulation]
		Define the operator $\cL \colon \W^{1,2}(D) \to (\W^{1,2}(D))^*$ by the relation
		\begin{align}
			\langle \cL u, v \rangle = a(u,v), \qquad u,v \in \W^{1,2}(D).
		\end{align}
	\end{definition}
	Note that $\cL$ is invertible by virtue of~\eqref{eq:garding}.

	Rewriting the Hölder relation in Lemma~\ref{lem:parameter} gives $\tfrac{1}{\tilde q} + \tfrac{1}{q} = \tfrac{1}{q'}$. Keeping this in mind, introduce the generalized gradient operators $S$ by
	\begin{align}
		\label{eq:S}
		S &\coloneqq \begin{bmatrix} \nabla \\ \Tr \end{bmatrix} \colon \W^{1,2}(D) \to \L^2(D)^n \oplus \L^q(\Gamma),
	\end{align}
	and the coefficient matrix $B$ by
	\begin{align}
		\label{eq:B}
		B \coloneqq \begin{bmatrix} A & \\ & b \end{bmatrix} \colon \L^2(D)^n \oplus \L^q(\Gamma) \to \L^2(D)^n \oplus \L^{q'}(\Gamma).
	\end{align}
	In terms of $S$ and $B$, the form $a$ can be rewritten as
	\begin{align}
		a(u,v) = \langle B Su, Sv \rangle.
	\end{align}
	The pairing $\langle \cdot, \cdot \rangle$ denotes the direct sum of the $\L^2$ inner products on $D$ or $\Gamma$ for the respective components.
	Consequently, we find the factorization identity
	\begin{align}
		\label{eq:factorization}
		\cL = S^* B S.
	\end{align}

	To simplify notation, put $-\Div \coloneqq (\nabla)^*$, where $\nabla \colon \W^{1,2}(D) \to \L^2(D)^n$ is the usual distributional gradient. The operator $-\Div$ acts as a bounded operator $\L^2(D)^n \to (\W^{1,2}(D))^*$.
	With this notation at hand, we can write
	\begin{align}
		S^* = \begin{bmatrix} -\Div & \gamma^* \end{bmatrix}.
	\end{align}

	\begin{example}
		\label{ex:S*}
		Let $F \colon D \to \C^n$, $g\colon \Gamma \to \C$ and $u\colon D \to \C$. Assume that $F,g,u$ and $D$ are smooth. Then
		\begin{align}
			\langle S^*(F,g), u \rangle = \iint_D \nabla F \cdot \overline{\nabla u} \d x + \int_{\partial D} (\nu \cdot F + \ind_\Gamma g) \overline{\gamma(u)} \d \sigma.
		\end{align}
		Hence, the range of $S^*$ contains distributions in $D$ and on $\partial D$. On $\Gamma$, the contributions from $\nu \cdot F$ and $g$ interact.
	\end{example}
	Now, we restrict $\cL$ to a closed, unbounded operator in $\L^2(D)$.

	\begin{definition}[Elliptic operator with Robin BC -- strong formulation]
		\label{def:L}
		Define the unbounded operator $L$ in $\L^2(D)$ as follows: if $u \in \W^{1,2}(D)$, $f \in \L^2(D)$, then
		\begin{align}
			u \in \dom(L), \; L u = f \quad \Longleftrightarrow \quad \forall v \in \W^{1,2}(D) \colon \; \langle \cL u, v \rangle = \iint_D f \overline{v} \d x.
		\end{align}
	\end{definition}

	As is illustrated by Example~\ref{ex:S*}, the operator $L$ satisfies Robin boundary conditions on $\Gamma$ and Neumann boundary conditions on $\partial D \setminus \Gamma$ in a weak sense.

	\begin{remark}
		\label{rem:weak_Laplacian}
		The choice $A = \Id$ and $b = 1$ leads to a Laplacian $-\Delta$ with certain Robin boundary conditions.
		In its weak formulation, it is simply given by $S^* S$. We will use this operator as an unperturbed smoothing operator for $\cL$ later on.
		Since $S^*S$ is self-adjoint, it satisfies the Kato square root property.
	\end{remark}

	By classical form theory, $L$ is closed, densely defined, maximal accretive and $\omega$-sectorial for some angle $\omega \in [0, \tfrac{\pi}{2})$. In particular, it possesses a unique maximal accretive square root $\sqrt{L}$, which is the operator appearing in Theorem~\ref{thm:main} below. The operator $L^*$ is associated with the adjoint form $a^*(u,v) \coloneqq \overline{a(v,u)}$. Note that $L$ is self-adjoint if and only if $A$ is self-adjoint and $a$ is real.

	\begin{theorem}[Kato square root property for Robin BC]
		\label{thm:main}
		Let $D \subseteq \R^n$ be open and bounded, and let $\Gamma \subseteq \partial D$. Suppose that the pair $(D,\Gamma)$ satisfies Assumption~\ref{ass:D}.
		Let $A = (a_{ij})_{ij} \in \L^\infty(D)$ and $b \in L^{\tilde q}(D)$, where $\tilde q \in (\tfrac{d}{d + 2 - n}, \infty]$ using $d$ from Assumption~\ref{ass:D}.
		Assume that the Robin form $a$ defined in~\eqref{eq:a} is elliptic in the sense of~\eqref{eq:garding},
		and associate an operator $L$ on $\L^2(D)$ with it (Definition~\ref{def:L}).
		Then we have the identity $\dom(\sqrt{L}) = \W^{1,2}(D)$ along with the estimate
		\begin{align}
			\label{eq:kato_estimate}
			\| \sqrt{L} u \|_{\L^2(D)} \approx \| u \|_{\W^{1,2}(D)} \approx \sqrt{\Re a(u,u)}, \quad u \in \W^{1,2}(D),
		\end{align}
		where the implicit constants depend on the coefficients of $L$ only via $\| A \|_{\infty}$, $\| b \|_{\tilde q}$ and $\lambda$.
	\end{theorem}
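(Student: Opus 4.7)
The norm equivalence $\|u\|_{\W^{1,2}(D)} \approx \sqrt{\Re a(u,u)}$ is immediate from the G\rr{a}rding inequality~\eqref{eq:garding} together with the continuity of $a$, so the real content is the equivalence $\|\sqrt{L} u\|_{\L^2(D)} \approx \|u\|_{\W^{1,2}(D)}$ together with the identification $\dom(\sqrt{L}) = \W^{1,2}(D)$. Since $(\sqrt{L})^* = \sqrt{L^*}$ and $L^*$ is the Robin operator for the coefficients $A^*, \overline{b}$ (which satisfy the same hypotheses), a standard polarization argument reduces the two-sided bound to the single-sided estimate $\|\sqrt{L} u\|_{\L^2(D)} \lesssim \|u\|_{\W^{1,2}(D)}$, proved for both $L$ and $L^*$.

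I would reduce this upper bound to a quadratic estimate as usual. Starting from
\begin{align}
\sqrt{L} u = \frac{2}{\pi} \int_0^\infty L(\I + t^2 L)^{-1} u \d t,
\end{align}
and setting $R_t \coloneqq (\I + t^2 L)^{-1}$, so that $tLR_t = t^{-1}(\I - R_t)$, McIntosh's square function characterization of the $m$-accretive calculus reduces the task to
\begin{align}
\int_0^\infty \|tLR_t u\|_{\L^2(D)}^2 \ddt \lesssim \|u\|_{\W^{1,2}(D)}^2.
\end{align}
Exploiting the factorization $\cL = S^* B S$, the integrand reads schematically as $tS^*BSR_t u$, so that the boundary trace $b\Tr(R_t u)$ appears inside $BSR_t u \in \L^2(D)^n \oplus \L^{q'}(\Gamma)$ alongside the volume part $A\nabla R_t u$.

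Here Lemma~\ref{lem:volume} is the key device: it absorbs the boundary contribution into an interior square function, reducing matters to the purely interior bound
\begin{align}
\int_0^\infty \|t\nabla R_t u\|_{\L^2(D)}^2 \ddt \lesssim \|u\|_{\W^{1,2}(D)}^2.
\end{align}
This interior quadratic estimate I would prove by adapting the second-order Kato machinery from~\cite{Kato-Square-Root-Proof,AKM} to the locally uniform setting. The dyadic structure available on $D$ by interior thickness (Section~\ref{subsec:dyadic}) yields an averaging operator $\AvOp_t$ at scale $t$; splitting $t\nabla R_t = t\nabla R_t(\I-\AvOp_t) + t\nabla R_t\AvOp_t$, the first, off-diagonal piece is controlled by resolvent off-diagonal decay together with Poincar\'e-type inequalities provided by the locally uniform geometry, while the principal part $t\nabla R_t\AvOp_t$ is treated by a $T(b)$-type argument producing a Carleson measure estimate. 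Comparison with the unperturbed, self-adjoint Robin Laplacian $L_0 = S^*S$ of Remark~\ref{rem:weak_Laplacian}---which trivially satisfies the Kato estimate---supplies the required accretivity of the family of test functions on interior dyadic cubes.

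The decisive obstacle is exactly what is flagged in the introduction: no first-order self-adjoint operator encodes the Robin condition, so the full second-order analysis must be performed, and the boundary term in $a$ cannot be side-stepped by abstract functional calculus. The crucial role of Lemma~\ref{lem:volume} is to convert this awkward boundary term into a controllable interior object. A secondary, more technical difficulty is the principal-part analysis at low frequencies, where the $T(b)$ test functions and the dyadic averaging must be compatible with the mixed Robin/Neumann behavior on $\partial D$ while remaining close to constants on interior dyadic cubes.
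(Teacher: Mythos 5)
Your high-level scaffolding (reduce to a quadratic estimate, factorize $\cL = S^*BS$, use Lemma~\ref{lem:volume} to dispose of the boundary term, then run a $T(b)$/Carleson argument in the interior) is the right outline, and you have correctly spotted that Lemma~\ref{lem:volume} is the technical pivot that converts the boundary term into something controllable. However, there are two substantive gaps in how you carry this out.

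First, you have skipped the smoothing step entirely, and as a consequence mis-stated what Lemma~\ref{lem:volume} buys. The paper does not reduce to a quadratic estimate for the raw resolvent family: it first inserts the unperturbed resolvent $P_t = (1+t^2 S^*S)^{-1}$ via Lemma~\ref{lem:smoothing}, so that the object to estimate becomes $\Theta_t S P_t u$ rather than $\Theta_t S u$. Lemma~\ref{lem:volume} is then formulated and proved \emph{with} the $P_t$ smoothing inserted and only for $t\in(0,1]$ (the large scales $t\geq 1$ are treated separately in Lemma~\ref{lem:finite_scales}); both of these features use Proposition~\ref{prop:coercivity}, the coercivity of $(S^*S)^{(1+\alpha)/2}$, which your sketch does not mention at all. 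Without the smoothing one does not get the needed $t^{1-s}$ decay of the boundary term, because the resolvent $(1+t^2L)^{-1}$ alone only maps $\L^2(D)\to\H^{s,2}(D)$ with a singular factor $t^{-s}$, and one needs the commuting $P_t$ to absorb the $\W^{1,2}$ norm of $u$.

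Second, the object you claim to have reduced to is not the right one. You state the interior estimate as $\int_0^\infty \|t\nabla R_t u\|^2\ddt \lesssim \|u\|^2_{\W^{1,2}}$, but what Lemma~\ref{lem:volume} leaves over after the smoothing and small-scale reductions is $\int_0^1 \|\wt\Theta_t \nabla P_t u\|^2\ddt$, where $\wt\Theta_t = -t(1+t^2\cL)^{-1}\Div A$. This operator acts on the \emph{gradient} of the smoothed $u$, which is exactly what the principal part approximation and dyadic averaging in Lemmas~\ref{lem:ppa_uniform_bound}--\ref{lem:remove_smoothing_operator} are designed for. Your decomposition $t\nabla R_t = t\nabla R_t(\I-\AvOp_t) + t\nabla R_t\AvOp_t$ puts the averaging operator on the \emph{input} $u$ rather than on $\nabla u$, and the operator $t\nabla R_t$ is not the one that admits a principal part (the paper's $\wt\gamma_t$ is the principal part of $\wt\Theta_t$, acting on gradients). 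So the Carleson step you describe would not go through as written. You would need to restructure the argument around $\wt\Theta_t\nabla P_t$, insert the coercivity estimate, and follow the chain of reductions in Subsections~\ref{subsec:smoothing}--\ref{subsec:ppa_conclusion}.
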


	\subsection{Off-diagonal estimates}
	\label{subsec:ODE}

	We establish off-diagonal estimates for elliptic operators with Robin boundary conditions. Since $\Tr(u \phi) = \Tr(u) \Tr(\phi)$ for suitable functions $u$ and $\phi$, the proof is similar to the proof for Neumann boundary conditions.

	\begin{proposition}[Off-diagonal estimates]
		\label{prop:ODE}
		There exist constants $C$ and $c$ depending on $\lambda$, $\| A \|_\infty$ and the dimension, such that for all measurable sets $E, F \subseteq D$, all $t > 0$ and all $f \in \L^2(D)$ with $\supp(f) \subseteq E$, we have for $u \coloneqq (1 + t^2 L)^{-1} f$ the estimate
		\begin{align}
			\left( \iint_F |u|^2 \right)^\frac{1}{2} + \left( \int_{\overline{F} \cap \Gamma} |t \Tr(u)|^q \right)^\frac{1}{q} + \left( \iint_F |t u|^2 \right)^\frac{1}{2} + \left( \iint_F |t \nabla u|^2 \right)^\frac{1}{2} \\
			\leq{} C \e^{-c\frac{\dist(E,F)}{t}} \left( \iint_E |f|^2 \right)^\frac{1}{2}.
		\end{align}
	\end{proposition}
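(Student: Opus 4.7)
The plan is to run the standard Gaffney--Davies exponential-weight argument for resolvent off-diagonal estimates. The one new observation over the Neumann case is that multiplication by a real-valued Lipschitz scalar commutes with the trace, so the Robin boundary contribution to the commutator cancels exactly and the argument closes with purely interior perturbation terms.

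Fixing $E, F, f$ with $\supp f \subseteq E$ and $u\coloneqq (1+t^2L)^{-1}f$, I start from the weak formulation
\begin{align}
\langle u,v\rangle_{\L^2(D)} + t^2\,a(u,v) = \langle f,v\rangle_{\L^2(D)}, \qquad v \in \W^{1,2}(D),
\end{align}
and introduce the $1$-Lipschitz cutoff $\phi(x)\coloneqq \min(\dist(x,E), \diam D)$ together with the exponential weight $\eta\coloneqq \exp(\alpha\phi/t)$ for a small $\alpha>0$ to be chosen. Then $\eta$ is bounded and Lipschitz on $\overline D$ with $\eta|_E=1$, $\eta \geq \e^{\alpha\dist(E,F)/t}$ on $F$ and --- by continuity --- on $\overline F\cap\Gamma$, and $|\nabla \eta|\leq (\alpha/t)\eta$ a.e. Testing against $v\coloneqq \eta^2 u$, the right-hand side reduces to $\langle \eta f,\eta u\rangle$ (because $\eta|_E=1$ and $\supp f \subseteq E$), bounded by $\|f\|_{\L^2}\|\eta u\|_{\L^2}$, while the product rule in the interior and the identity $\Tr(\eta^2 u)=\eta^2\Tr u$ on the boundary give
\begin{align}
a(u,\eta^2 u) = a(\eta u,\eta u) + R(u,\eta),
\end{align}
with the boundary contributions of the two forms agreeing (both equal $\int_\Gamma b\,\eta^2|\Tr u|^2\d\sigma$) and a purely interior remainder $R(u,\eta)$ controlled pointwise by $|\nabla\eta|\,|u|\,|\nabla u|+|\nabla \eta|^2|u|^2$. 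Taking real parts, invoking G\rr{a}rding's inequality~\eqref{eq:garding} for $\eta u$, inserting $|\nabla \eta|\leq(\alpha/t)\eta$, and absorbing with Young's inequality will yield, for $\alpha$ small enough depending only on $\lambda$ and $\|A\|_\infty$, the weighted master estimate
\begin{align}
\|\eta u\|_{\L^2(D)}^2 + t^2\|\eta u\|_{\W^{1,2}(D)}^2 \lesssim \|f\|_{\L^2(D)}^2.
\end{align}

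All four conclusions then follow from this master estimate. The interior $\L^2$ bounds come from restricting to $F$ and using $\eta\geq \e^{\alpha\dist(E,F)/t}$ there, while the third interior term is reduced to the others via the algebraic identity $t^2 L u = f - u$ together with $\supp f\subseteq E$. For the boundary contribution, Lemma~\ref{lem:parameter} (applied with $s$ close to $1$) gives $\|\Tr(\eta u)\|_{\L^q(\Gamma)}\lesssim \|\eta u\|_{\W^{1,2}(D)}\lesssim t^{-1}\|f\|_{\L^2}$; substituting $\Tr(\eta u)=(\eta|_{\partial D})\Tr u$ and using the lower bound on $\eta$ on $\overline F\cap\Gamma$ isolates the desired bound on $\|t\,\Tr u\|_{\L^q(\overline F\cap\Gamma)}$ with exponential decay. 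The main, and mild, obstacle will be verifying the boundary commutator cancellation and tracking the dependence of $\alpha$ on $\lambda$ and $\|A\|_\infty$ in the Young absorption; once these are confirmed, the rest is routine quantitative bookkeeping.
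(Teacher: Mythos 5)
Your proposal runs the same Gaffney--Davies exponential-weight argument as the paper: test the resolvent equation with $v = \eta^2 u$ for a Lipschitz weight $\eta$ of size $\e^{c\,\dist(\cdot,E)/t}$, observe that the Robin boundary term cancels in the commutator because $\Tr(\eta u)=\eta\,\Tr u$, absorb the purely interior remainder via G\rr{a}rding's inequality and Young's inequality, and read off the four terms from the resulting weighted master estimate. The only differences are cosmetic choices of weight (the paper uses $\eta=\e^{d\rho/(\alpha t)}-1$ with a smooth cutoff $\rho$ and $\alpha$ large, so that $\eta$ vanishes on $E$ and the source term drops out; you keep $\eta|_E=1$ with $\alpha$ small and estimate the source term by Cauchy--Schwarz), so this is essentially the same proof.
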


	\begin{proof}
		Fix $t > 0$, let $E, F \subseteq D$ measurable, and put $d \coloneqq \dist(E,F)$.
		Introduce a threshold $\alpha > 0$ to be determined later. We divide the proof into the cases $d \leq \alpha t$ and $d > \alpha t$. Note that the first case simply states boundedness, whereas the second case establishes genuine off-diagonal decay.

		\textbf{Step 1}: boundedness. Note that this step takes in particular care of the case $d \leq \alpha t$. Let $f \in \L^2(D)$ and set $u \coloneqq (1+t^2 L)^{-1} f \in \W^{1,2}(D)$. Using the G\rr{a}rding inequality~\eqref{eq:garding} and maximal accretivity, calculate
		\begin{align}
			\lambda \| t u \|_{\W^{1,2}(D)}^2 \leq t^2 \Re a(u,u) = \Re \langle t^2 L u, u \rangle_{\L^2(D)} \leq \| t^2 L u \|_{\L^2(D)} \| u \|_{\L^2(D)} \leq 2 \| f \|_{\L^2(D)}^2.
		\end{align}
		Taking the square root yields $\| t u \|_{\W^{1,2}(D)} \lesssim \| f \|_{\L^2(D)}$.
		In conjunction with boundedness of the trace operator and maximal accretivity, we deduce
		\begin{align}
			\| u \|_{\L^2(D)} + \| t \Tr(u) \|_{\L^q(\Gamma)} + \| t u \|_{\L^2(D)} + \| t \nabla u \|_{\L^2(D)} \lesssim \| f \|_{\L^2(D)},
		\end{align}
		which concludes this step.

		\textbf{Step 2}: off-diagonal bounds.
		Assume $d > \alpha t$, otherwise nothing has to be shown.
		Let $f \in \L^2(D)$ with $\supp(f) \subseteq E$.
		Note that $E$ and $F$ are disjoint owing to $d > \alpha t > 0$.
		Moreover, let $\rho \in \Cont^\infty(D)$ satisfy $\rho = 1$ on $F$, $\rho = 0$ on $E$, and
		\begin{align}
			\| \rho \|_\infty + d \| \nabla \rho \|_\infty \leq C,
		\end{align}
		where $C$ is a dimensional constant.
		Put $u \coloneqq (1 + t^2 L)^{-1} f$, so that $(1 + t^2 L)u = f$.
		Test this equation with $v \coloneqq u \eta^2$, where $\eta \coloneqq \e^{\frac{d\rho}{\alpha t}} - 1$, to find
		\begin{align}
			\label{eq:ode_testing}
			\iint_D f \overline{v} = \iint_D u \overline{v} + t^2 a(u,v).
		\end{align}
		Note that indeed $v \in \W^{1,2}(D)$, which uses that $\eta$ is bounded and smooth with bounded derivatives for fixed $t$. Also, taking $d > \alpha t$ into account, observe the following properties of $\eta$:
		\begin{align}
			\label{eq:eta_props}
			\eta = 0 \;\; \text{on } E, \quad
			\eta = \e^\frac{d}{\alpha t} - 1 \geq \frac{1}{2} \e^\frac{d}{\alpha t} \;\; \text{on } \overline{F}, \quad
			|\nabla \eta| = |\frac{d}{\alpha t} \nabla \rho \cdot \e^\frac{d\rho}{\alpha t}| \leq \frac{C}{\alpha t} |\eta + 1|.
		\end{align}
		Using the identities $\nabla v = \nabla(u\eta) \eta + u\eta \nabla \eta$ and $\eta \nabla u = \nabla (u\eta) - u\nabla \eta$, which readily follow from the product rule, we find
		\begin{align}
			\label{eq:ode_expand}
				\hspace*{1em}
				a(u,v) - a(\eta u, \eta u)= &- \iint_D u A\nabla \eta \cdot \overline{\nabla (\eta u)} + \iint_D \overline{u} A \nabla (\eta u) \cdot \nabla \eta - \iint_D |u|^2 A\nabla \eta \cdot \nabla \eta.
		\end{align}
		Note that the boundary term does not produce any error terms owing to the observation mentioned just before the proposition.
		Due to the support properties of $f$ and $\eta$, the left-hand side of~\eqref{eq:ode_testing} vanishes. Hence, by using~\eqref{eq:ode_expand}, the identity $u\overline{v} = |\eta u|^2$ and rearranging terms, we see
		\begin{align}
			&\iint_D |\eta u|^2 + t^2 a(\eta u, \eta u) \\
			={} &t^2 \iint_D u A\nabla \eta \cdot \overline{\nabla (\eta u)} - t^2 \iint_D \overline{u} A \nabla (\eta u) \cdot \nabla \eta + t^2 \iint_D |u|^2 A\nabla \eta \cdot \nabla \eta.
		\end{align}
		Then, boundedness of the trace operator (with constant $C_\Gamma > 0$), followed by the G\rr{a}rding inequality~\eqref{eq:garding}, yield
		\begin{align}
			\label{eq:ode_garding}
			\begin{aligned}
				&\iint_D |\eta u|^2 + \frac{\lambda}{2 C_\Gamma^2} t^2 \left( \iint_D |\Tr(\eta u)|^q \right)^\frac{2}{q} + \frac{\lambda}{2} t^2 \iint_D |\eta u|^2 + |\nabla(\eta u)|^2 \\
				\leq{} &\iint_D |\eta u|^2 + \lambda t^2 \iint_D |\eta u|^2 + |\nabla(\eta u)|^2 \\
				\leq{} &\iint_D |\eta u|^2 + t^2 \Re a(\eta u, \eta u) \\
				\leq{} &2 \Lambda t^2 \iint_D |u| |\nabla \eta| |\nabla (\eta u)| + \Lambda t^2 \iint_D |u|^2 |\nabla \eta|^2,
			\end{aligned}
		\end{align}
		where $\Lambda \coloneqq \| A \|_\infty$.
		We estimate the terms on the right-hand side of~\eqref{eq:ode_garding}.
		To start with, use~\eqref{eq:eta_props} to calculate
		\begin{align}
			\Lambda t^2 \iint_D |u|^2 |\nabla \eta|^2 &\leq  \frac{\Lambda C^2}{\alpha^2} \iint_D |u|^2 |\eta + 1|^2
			\leq \frac{2 \Lambda C^2}{\alpha^2} \iint_D |\eta u|^2 + \frac{2 \Lambda C^2}{\alpha^2} \iint_D |u|^2.
		\end{align}
		By taking $\alpha$ large depending on $\Lambda$ and the dimension, the first-term on the right-hand side can be absorbed into the left-hand side of~\eqref{eq:ode_garding}.
		Using this bound again, Young's inequality $2ab \leq \eps a^2 + \tfrac{1}{\eps} b^2$ reveals
		\begin{align}
			&2 \Lambda t^2 \iint_D |u| |\nabla \eta| \nabla (\eta u)| \\
			\leq{} &\frac{\lambda}{8} t^2 \iint_D |\nabla (\eta u)|^2 + \frac{8\Lambda^2}{\lambda} t^2 \iint_D |u|^2 |\nabla \eta|^2 \\
			\leq{} &\frac{\lambda}{8} t^2 \iint_D |\nabla (\eta u)|^2 + \frac{16 \Lambda^2 C^2}{\lambda \alpha^2} \iint_D |\eta u|^2 + \frac{16 \Lambda^2 C^2}{\lambda \alpha^2} \iint_D |u|^2.
		\end{align}
		By taking $\alpha$ large depending on $\lambda$, $\Lambda$ and the dimension, the first two terms on the right-hand side can be absorbed into the left-hand side of~\eqref{eq:ode_garding}.
		In summary, we have shown so far
		\begin{align}
			\label{eq:ODE_penultimate}
			\left( \iint_D |\eta u|^2 \right)^\frac{1}{2} + \left( \int_\Gamma |t \Tr(\eta u)|^q \right)^\frac{1}{q} + \left( \iint_D |t\eta u|^2 + |t\nabla(\eta u)|^2 \right)^\frac{1}{2} \lesssim \left( \iint_D |u|^2 \right)^\frac{1}{2}.
		\end{align}
		Note that $\eta$ is constant on $\overline{F}$, so that $\nabla(\eta u) = \eta \nabla u$ and $\Tr(\eta u) = \eta \Tr(u)$. Then, by~\eqref{eq:eta_props}, we obtain the bound
		\begin{align}
			\frac{1}{2} \e^\frac{d}{\alpha t} \left( \iint_F |u|^2 \right)^\frac{1}{2} + \frac{1}{2} \e^\frac{d}{\alpha t} \left( \int_{\overline{F} \cap \Gamma} |t \Tr(u)|^q \right)^\frac{1}{q} + \frac{1}{2} \e^\frac{d}{\alpha t} \left( \iint_F |t u|^2 + |t \nabla u)|^2 \right)^\frac{1}{2}
			\lesssim \left( \iint_D |u|^2 \right)^\frac{1}{2}.
		\end{align}
		Now, the claim follows from Step~1 upon multiplication by $\e^{-\frac{d}{\alpha t}}$.
	\end{proof}

	\section{A coercivity property for the Laplacian with Robin BC}
	\label{sec:coercivity}

	In this section, we show a coercivity property for $S^*S$. More precisely, we establish coercivity of fractional powers of $S^*S$ of order slightly above $\nicefrac{1}{2}$ in the scale of Bessel potential spaces on $D$.

	\begin{proposition}[Coercivity of $S^*S$]
		\label{prop:coercivity}
		There exists $\alpha \in (0,1]$ such that $\dom((S^*S)^\frac{1+\alpha}{2}) = \H^{1+\alpha,2}(D)$.
		In particular, there exists $c>0$ such that, for every $v \in \dom((S^*S)^\frac{1+\alpha}{2})$, we have the estimate
		\begin{align}
			\label{eq:coercivity_estimate}
			\| (S^*S)^\frac{1+\alpha}{2} v \|_{\L^2(D)} \geq c \| v \|_{\H^{1+\alpha,2}(D)}.
		\end{align}
	\end{proposition}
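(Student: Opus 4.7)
\emph{Proof plan.} The strategy combines a Sneiberg-type extrapolation for $T := I + S^*S$ on the Bessel potential scale with the functional calculus for positive self-adjoint operators. Setting $A := T^{1/2}$, and noting that $T$ and $S^*S$ commute with equivalent fractional-power norms (so that the statement for $(S^*S)^{(1+\alpha)/2}$ is equivalent to one for $A^{1+\alpha}$), it suffices to prove that $A^{1+\alpha} : \H^{1+\alpha,2}(D) \to \L^2(D)$ is an isomorphism for some $\alpha \in (0,1]$.

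\textbf{Step 1 (Sneiberg on the Bessel scale).} Lax--Milgram applied to the trivially coercive form $\Re\langle Tu,u\rangle = \|u\|_{\H^{1,2}(D)}^2 + \|\Tr u\|_{\L^2(\Gamma)}^2 \geq \|u\|_{\H^{1,2}(D)}^2$ gives that $T : \H^{1,2}(D) \to \H^{-1,2}(D)$ is an isomorphism. On the shifted scale, $T : \H^{1+s,2}(D) \to \H^{-1+s,2}(D)$ is bounded for $s$ in a neighborhood of $0$: the part $-\Delta + I$ is always bounded on shifted Bessel scales, while the boundary part $\Tr^*\Tr$ factors as $\H^{1+s,2}(D) \xrightarrow{\Tr} \L^q(\Gamma) \xrightarrow{\Tr^*} (\H^{s_*,2}(D))^* = \H^{-s_*,2}(D) \hookrightarrow \H^{-1+s,2}(D)$, the final embedding requiring $s \leq 1-s_*$ (and dually $s \geq -(1-s_*)$). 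Since $(\H^{s,2}(D))_s$ is a complex interpolation scale (Prop.~3.8 of~\cite{BE}), Sneiberg's stability theorem yields some $\alpha \in (0, 1-s_*)$ with $T : \H^{1+\alpha,2}(D) \to \H^{-1+\alpha,2}(D)$ still an isomorphism.

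\textbf{Step 2 (Transfer to the square root).} Positive self-adjointness of $T$ with $\dom(A) = \H^{1,2}(D)$ gives $A : \H^{1,2}(D) \to \L^2(D)$ iso; self-adjoint duality yields $A : \L^2(D) \to \H^{-1,2}(D)$ iso, and complex interpolation between these endpoints produces $A : \H^{\theta,2}(D) \to \H^{\theta-1,2}(D)$ iso for every $\theta \in [0,1]$, in particular at $\theta = \alpha$. For $u \in \H^{1+\alpha,2}(D) \subseteq \H^{1,2}(D)$, Step~1 gives $A^2 u = Tu \in \H^{-1+\alpha,2}(D)$; using that $A : \H^{\alpha,2}(D) \to \H^{-1+\alpha,2}(D)$ is iso together with the injectivity of $A : \L^2(D) \to \H^{-1,2}(D)$, we identify $Au$ as an element of $\H^{\alpha,2}(D)$ with $\|Au\|_{\H^{\alpha,2}(D)} \approx \|u\|_{\H^{1+\alpha,2}(D)}$. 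A symmetric surjectivity argument then shows that $A : \H^{1+\alpha,2}(D) \to \H^{\alpha,2}(D)$ is an isomorphism.

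\textbf{Step 3 (Composition).} The spectral theorem (via Heinz's interpolation identity for positive self-adjoint operators) gives $\dom(A^\alpha) = [\L^2(D), \dom(A)]_\alpha = \H^{\alpha,2}(D)$ with $A^\alpha : \H^{\alpha,2}(D) \to \L^2(D)$ iso. Factoring $A^{1+\alpha} = A^\alpha \cdot A$ and composing the two isomorphisms yields $A^{1+\alpha} : \H^{1+\alpha,2}(D) \to \L^2(D)$ iso, which delivers the domain identification $\dom(T^{(1+\alpha)/2}) = \H^{1+\alpha,2}(D)$ together with the coercivity estimate~\eqref{eq:coercivity_estimate}. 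The principal obstacle is Step~1: verifying the boundedness of $T$ on the full shifted Bessel scale through the mapping properties of $\Tr^*\Tr$ (invoking Proposition~\ref{prop:trace}) and then correctly applying Sneiberg's theorem on the complex interpolation scale in the locally uniform setting; Steps~2 and~3 proceed by standard functional-calculus arguments once the regularity shift of Step~1 is in place.
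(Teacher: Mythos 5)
Your strategy (Lax--Milgram, Sneiberg on the Bessel scale, functional calculus for fractional powers) closely parallels the paper's, and your Steps~2--3 are carried out correctly, in fact a bit more explicitly than the paper's sketch. The gap is in the opening reduction: you replace $S^*S$ by $T = I + S^*S$ precisely so that coercivity of $T$ on $\H^{1,2}(D)$ becomes automatic, and then assert in passing that ``$T$ and $S^*S$ commute with equivalent fractional-power norms.'' That equivalence of \emph{norms} fails for a general non-negative self-adjoint operator: writing $E$ for the spectral measure of $S^*S$,
\begin{align}
    \|(S^*S)^{s}u\|_{\L^2(D)}^2 = \int_{[0,\infty)} \lambda^{2s}\d\|E_\lambda u\|^2 \quad \text{while} \quad \|T^s u\|_{\L^2(D)}^2 = \int_{[0,\infty)} (1+\lambda)^{2s}\d\|E_\lambda u\|^2,
\end{align}
and $\lambda^{2s}/(1+\lambda)^{2s} \to 0$ as $\lambda \to 0^+$, so $\|T^su\|$ dominates $\|(S^*S)^su\|$ but \emph{not} conversely unless the spectral measure is supported away from the origin, that is, unless $S^*S$ is bounded below on $\L^2(D)$. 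Your argument therefore does deliver the set identity $\dom((S^*S)^{(1+\alpha)/2}) = \dom(T^{(1+\alpha)/2}) = \H^{1+\alpha,2}(D)$ (the domains coincide because $(1+\lambda)^{2s}$ is bounded near $0$), but it does not deliver the lower bound~\eqref{eq:coercivity_estimate} as stated, and by Remark~\ref{rem:coercivity} that estimate is exactly the part the rest of the paper uses.

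What is missing is the coercivity of $S^*S$ itself: $\langle S^*Su,u\rangle \gtrsim \|u\|_{\W^{1,2}(D)}^2$, equivalently that $\|\nabla u\|_{\L^2(D)}+\|\Tr u\|_{\L^2(\Gamma)}$ controls $\|u\|_{\L^2(D)}$. This is non-trivial --- it is Step~1 of the paper's proof --- and is obtained by splitting $u = (u-(u)_\Gamma)+(u)_\Gamma$, where $(u)_\Gamma$ is the mean of $\Tr u$ over $\Gamma$, bounding $(u)_\Gamma$ by $\|\Tr u\|_{\L^2(\Gamma)}$ via Jensen, and applying a Poincar\'e inequality on the kernel of the functional $v \mapsto (v)_\Gamma$ (using compactness of $\W^{1,2}(D) \hookrightarrow \L^2(D)$ on the bounded set $D$ and the fact that this kernel contains no non-trivial constants). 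Once this is in place you can simply run your Steps~1--3 with $S^*S$ in place of $T$; alternatively keep $T$ throughout and use the invertibility of $S^*S$ at the very end to convert $\|T^{(1+\alpha)/2}v\|$ into $\|(S^*S)^{(1+\alpha)/2}v\|$.
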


	\begin{remark}
		\label{rem:coercivity}
		Only the coercivity estimate~\eqref{eq:coercivity_estimate} is needed to establish the Kato square root estimate.
	\end{remark}

	Recall that $\H^{1,2}(D) = \W^{1,2}(D)$. We start out with a technical lemma.

	\begin{lemma}
		\label{lem:extrapolation_laplace}
		There exists $t_0 > 0$ such that, if $0 < t < t_0$, then $S^*S \colon \W^{1,2}(D) \to (\W^{1,2}(D))^*$ restricts/extends to a bounded operator $\H^{1 \pm t,2}(D) \to (\H^{1 \mp t,2}(D))^*$.
	\end{lemma}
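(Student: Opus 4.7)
The proof reduces to the bilinear estimate
\begin{align}
|\langle S^*S u, v \rangle| \leq C \|u\|_{\H^{1+t,2}(D)} \|v\|_{\H^{1-t,2}(D)}, \qquad u, v \in \W^{1,2}(D).
\end{align}
Indeed, given this, the ``restricts'' case follows because $\H^{1+t,2}(D) \subseteq \W^{1,2}(D)$ while $\W^{1,2}(D)$ is dense in $\H^{1-t,2}(D)$ (smooth restrictions from $\R^n$ lie in $\W^{1,2}$ and are dense in every Bessel space), so $S^*Su$ extends by continuity to a bounded functional on $\H^{1-t,2}(D)$. The ``extends'' case follows by swapping $u$ and $v$. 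Writing $\langle S^*Su, v \rangle = \iint_D \nabla u \cdot \overline{\nabla v} \d x + \int_\Gamma \Tr(u) \overline{\Tr(v)} \d\sigma$, I would address the two summands separately.

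The boundary summand is handled by Proposition~\ref{prop:trace} applied with $r = 2$, which gives $\Tr \colon \H^{s,2}(D) \to \L^2(\Gamma)$ boundedly for $s \in ((n-d)/2, 1]$. Since $d > n-2$, the exponent $(n-d)/2 < 1$; choosing $t_0 \leq (d+2-n)/2$ keeps both $1-t$ and $1+t$ admissible (for the latter, use the continuous inclusion $\H^{1+t,2} \hookrightarrow \H^{1,2}$), and Cauchy--Schwarz on $\L^2(\Gamma)$ closes the term.

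For the volume summand, fix extensions $U \in \H^{1+t,2}(\R^n)$ of $u$ and $V \in \H^{1-t,2}(\R^n)$ of $v$ that nearly realize the quotient norms. Since $u, v \in \W^{1,2}(D)$, the integral $\iint_D \nabla u \cdot \overline{\nabla v}$ is a classical $\L^2(D)$ pairing and equals $\iint_{\R^n} \ind_D \, \nabla U \cdot \overline{\nabla V}$. The Fourier-analytic boundedness $\nabla \colon \H^{s+1,2}(\R^n) \to \H^{s,2}(\R^n)^n$ places $\nabla U \in \H^{t,2}(\R^n)^n$ and $\nabla V \in \H^{-t,2}(\R^n)^n$, and the $\L^2$-compatible antiduality pairing on $\R^n$ yields
\begin{align}
\left| \iint_D \nabla u \cdot \overline{\nabla v} \right| \leq \|\ind_D \nabla U\|_{\H^{t,2}(\R^n)^n} \|\nabla V\|_{\H^{-t,2}(\R^n)^n}.
\end{align}
The decisive input is that multiplication by $\ind_D$ is bounded on $\H^{t,2}(\R^n)$ for $0 \leq t < 1/2$; for the locally uniform and interior-thick $D$ of Assumption~\ref{ass:D} this follows from a Hardy-type inequality of the shape $\int_D |f|^2 \dist(x, \partial D)^{-2t} \d x \lesssim \|f\|_{\H^{t,2}(D)}^2$. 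Taking infima over extensions then controls the right-hand side by $\|u\|_{\H^{1+t,2}(D)} \|v\|_{\H^{1-t,2}(D)}$. Setting $t_0 := \min\{1/2, (d+2-n)/2\}$ merges the boundary and volume estimates.

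The principal technical obstacle is the characteristic-function multiplier bound on $\H^{t,2}(\R^n)$; it is what forces $t_0$ to stay below $1/2$. An alternative, which avoids any geometric multiplier input but relies instead on interpolation tailored to the present scale, would be to factor $S^*S$ through the extended operator $S \colon \H^{1 \pm t,2}(D) \to \H^{\pm t,2}(D)^n \oplus \L^2(\Gamma)$ by establishing $\nabla \colon \H^{1-t,2}(D) \to \H^{-t,2}(D)^n$ through complex interpolation on the scale $\H^{s,2}(D)$ (using the interpolation rules recalled after Definition~\ref{def:Htp}) and then dualizing; regardless of which route one chooses, the remaining steps are routine extension arguments, Sobolev embedding, and Cauchy--Schwarz.
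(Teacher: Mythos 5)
Your splitting of $\langle S^*Su,v\rangle$ into a volume and a boundary piece, and your handling of the boundary piece via Proposition~\ref{prop:trace} with $r=2$ and Cauchy--Schwarz, agree with the paper. The volume piece, however, is where you diverge, and there is a genuine gap.

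You reduce the volume estimate to the claim that multiplication by $\ind_D$ is bounded on $\H^{t,2}(\R^n)$ for $0\le t<\nicefrac{1}{2}$, justified via a fractional Hardy inequality $\iint_D |f|^2\dist(\cdot,\partial D)^{-2t}\d x\lesssim \|f\|_{\H^{t,2}(D)}^2$. This is not established anywhere in the paper, and the threshold $\nicefrac{1}{2}$ is wrong in the present generality: the weight $\dist(\cdot,\partial D)^{-2t}$ is locally integrable near the boundary only when $2t<n-\dim_M(\partial D)$, where $\dim_M$ denotes the upper Minkowski dimension. Under Assumption~\ref{ass:D} the portion $\Gamma$ is a $d$-set with $d\in(n-2,n)$, so $d$ can be made arbitrarily close to $n$, forcing any admissible $t$ to be arbitrarily small; and no dimension or regularity hypothesis at all is imposed on $\partial D\setminus\Gamma$. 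Hence the Hardy/multiplier input is neither free nor available at the level of generality of the paper, and fixing $t_0=\min\{\nicefrac{1}{2},\nicefrac{(d+2-n)}{2}\}$ does not repair this. By contrast, the paper avoids any extension-by-zero: it pairs $\nabla u$ and $\nabla v$ \emph{on $D$}, using that the $\H^{t,2}(D)$--$\H^{-t,2}(D)$ antiduality extends the $\L^2(D)$ pairing, and then invokes boundedness of $\nabla\colon\H^{1\pm t,2}(D)\to\H^{\pm t,2}(D)^n$ from~\cite[Lem.~4.10]{BEH}. The direction $\H^{1+t,2}(D)\to\H^{t,2}(D)^n$ is immediate by restriction; the direction $\H^{1-t,2}(D)\to\H^{-t,2}(D)^n$ into a dual space is the nontrivial piece and is exactly what that reference provides for locally uniform $D$ and small $t_0$. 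Your closing alternative essentially sketches this route, but as a passing remark rather than the main argument; to make the proof correct you should promote that alternative, citing~\cite[Lem.~4.10]{BEH} rather than trying to prove the gradient mapping by an unspecified interpolation, and discard the $\ind_D$-multiplier argument.
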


	\begin{proof}
		By definition, we have
		\begin{align}
			\langle S^*S u, v \rangle = \iint_D \nabla u \cdot \overline{\nabla v} \d x + \int_\Gamma \Tr(u) \overline{\Tr(v)} \d \sigma, \quad u,v \in \W^{1,2}(D).
		\end{align}
		Assume in addition $u \in \H^{1 \pm t,2}(D)$ and $v \in \H^{1 \mp t,2}(D)$. Since the $\H^{\pm t,2}(D)$ -- $\H^{\mp t,2}(D)$ duality extends the inner product on $\L^2(D)$, boundedness of the gradient in the scale of Bessel potential spaces on $D$ (established in our geometric setting for $t_0$ small enough in~\cite[Lem.~4.10]{BEH}) yields
		\begin{align}
			\Bigl| \iint_D \nabla u \cdot \overline{\nabla v} \d x \Bigr| \leq \| \nabla u \|_{\H^{\pm t,2}(D)} \| \nabla v \|_{\H^{\mp t,2}(D)} \lesssim \| u \|_{\H^{1 \pm t,2}(D)} \| v \|_{\H^{1 \mp t,2}(D)}.
		\end{align}
		Arguing similarly in the boundary, this time using boundedness of the trace operator,
		\begin{align}
			\Bigl| \int_\Gamma \Tr(u) \overline{\Tr(v)} \d \sigma \Bigr| \leq
			\| \Tr(u) \|_{\L^2(\Gamma)} \| \Tr(v) \|_{\L^2(\Gamma)} \lesssim \| u \|_{\H^{1 \pm t,2}(D)} \| v \|_{\H^{1 \mp t,2}(D)},
		\end{align}
		provided $t_0$ is small enough so that $1 - t > \tfrac{n}{2} - \tfrac{d}{2}$. Here, we used that $d > n-2$.
		Therefore, the claim follows by density.
	\end{proof}

	Now, we can proceed with the proof of the coercivity property.

	\begin{proof}[Proof of Proposition~\ref{prop:coercivity}]
		Let $t_0$ be as in Lemma~\ref{lem:extrapolation_laplace}.
		The proof divides onto four steps.

		\textbf{Step 1}: there exists $c > 0$ such that $\langle S^*S u, u \rangle \ge c \| u \|_{\H^1(D)}^2$.

		Note that, since $\langle S^*S u, u \rangle = \| \nabla u \|_{\L^2(D)^n}^2 + \| \Tr(u) \|_{\L^2(\Gamma)}^2$, it suffices to show $\| \nabla u \|_{\L^2(D)^n} + \| \Tr(u) \|_{\L^2(\Gamma)} \ge c \| u \|_{\L^2(D)}$.
		To this end, write $u = (u - (u)_{\Gamma}) + (u)_{\Gamma}$, where $(u)_{\Gamma} \coloneqq \fint_\Gamma \Tr(u) \d \sigma$ is the mean of $\Tr(u)$ over $\Gamma$.
		Here, we use that $\sigma(\Gamma) > 0$ as $\Gamma$ is a $d$-set.
		First, we have
		\begin{align}
			\| (u)_\Gamma \|_{\L^2(D)} = |(u)_\Gamma| |D|^\frac{1}{2} \leq \left( \frac{|D|}{\sigma(\Gamma)} \right)^\frac{1}{2} \| \Tr(u) \|_{\L^2(\Gamma)}
		\end{align}
		by Jensen's inequality.
		Second, consider the kernel $N$ of the bounded functional $\W^{1,2}(D) \ni v \mapsto (v)_\Gamma \in \C$, which is a closed subspace of $\W^{1,2}(D)$ containing no non-trivial constants. Since the embedding $\W^{1,2}(D) \subseteq \L^2(D)$ is compact by boundedness of $D$, the following Poincaré inequality holds:
		\begin{align}
			\| v \|_{\L^2(D)} \lesssim \| \nabla v \|_{\L^2(D)^n}, \quad v \in N.
		\end{align}
		As averaging is a projection, we find $u - (u)_\Gamma \in N$.
		Consequently, we get
		\begin{align}
			\| u - (u)_\Gamma \|_{\L^2(D)} \lesssim \| \nabla(u - (u)_\Gamma) \|_{\L^2(D)^n} = \| \nabla u \|_{\L^2(D)^n}.
		\end{align}
		Combining both bounds completes this step.

		\textbf{Step 2}: there exists $\eps \in (0,t_0)$ such that $S^*S$ is $\H^{1 \pm t,2}(D) \to (\H^{1 \mp t,2}(D))^*$ invertible for all $t \in (0, \eps)$.

		By Step~1 and the Lax--Milgram lemma, $S^*S \colon \H^{1,2}(D) \to (\H^{1,2}(D))^*$ is invertible. Moreover, $S^*S$ is bounded between the complex interpolation scales $\{ \H^{1 + t,2}(D) \}_{-t_0 < t < t_0}$ and $\{ (\H^{1 - t,2}(D))^* \}_{-t_0 < t < t_0}$. Hence, the claim of this step follows from Sneiberg's lemma~\cite{Sneiberg-Original}.

		\textbf{Step 3}: there holds the identity $\dom((S^*S)^\frac{1+t}{2}) = \H^{1+t,2}(D)$ for all $t \in (0,\eps)$.

		We sketch the argument, further details are presented in~\cite[Proof of Thm.~1.2]{BEH}.
		The idea is to decompose
		\begin{align}
			(S^*S)^{-\frac{1+t}{2}} = (S^*S)^{-1} (S^*S)^\frac{1-t}{2}.
		\end{align}
		The left-hand side is an isomorphism onto $\dom((S^*S)^{\frac{1+t}{2}})$.
		Hence, we need to show that the right-hand side is an isomorphism onto $\H^{1+t,2}(D)$.
		We have just established in Lemma~\ref{lem:extrapolation_laplace} that $(S^*S)^{-1}$ is an $\H^{-1+t,2}(D) \to \H^{1+t,2}(D)$ isomorphism.
		As for $(S^*S)^\frac{1-t}{2}$, it follows from the self-adjoint square root property, complex interpolation and a duality argument that it is an $\L^2(D) \to \H^{-1+t,2}(D)$ isomorphism as desired.
		Note that these are generic arguments independent of the Robin boundary conditions.
		This concludes the proof.

		\textbf{Step 4}: conclusion.

		Set $\alpha = \nicefrac{\eps}{2}$. By Step~3, it only remains to establish the coercivity estimate~\eqref{eq:coercivity_estimate}. By invertibility of $S^*S$, the space $\dom((S^*S)^\frac{1+\alpha}{2})$ can equivalently be equipped with the homogeneous graph norm. Then,~\eqref{eq:coercivity_estimate} is immediate.
	\end{proof}

	\section{Reduction to a Carleson measure estimate}
	\label{sec:reduction}

	In a series of reduction steps, we first reformulate the validity of Theorem~\ref{thm:main} in terms of quadratic estimates, which are successively reduced to showing that a certain measure is a Carleson measure. The Carleson measure property is shown in Section~\ref{sec:carleson}.

	\subsection{Reduction to a quadratic estimate}
	\label{subsec:equivalence_qe}

	The operator family $\Theta_t$ introduced in the following definition will be the main player in our quadratic estimate.

	\begin{definition}
		For every $t>0$, the operator $\Theta_t \colon \L^2(D) \oplus \L^q(\Gamma) \to \W^{1,2}(D)$ is defined by
		\begin{align}
			\Theta _t = t(1+t^2 \cL)^{-1} S^* B.
		\end{align}
	\end{definition}

	We show uniform bounds in $\L^2(D)$ for $\Theta_t$ in the following result.

	\begin{lemma}
		\label{lem:Theta_t_bounded}
		The operators $\Theta_t$ act as bounded operators \begin{align}
			\Theta_t \colon \L^2(D)^n \oplus \L^q(\Gamma) \to \L^2(D)
		\end{align}
		uniformly for $t>0$.
		Moreover, there are constants $C,c \in (0,\infty)$ such that, for all measurable sets $E,F \subseteq \R^n$, all $t > 0$ and all $G\in \L^2(D)^n$, $g \in \L^q(\Gamma)$ satisfying $\supp(G) \subseteq E$ and $\supp(g) \subseteq \overline{E}$, we have for $$u \coloneqq \Theta_t \begin{bmatrix} G \\ g \end{bmatrix}$$ the estimate
		\begin{align}
			\left( \iint_F |u|^2 \right)^\frac{1}{2} \leq C \e^{-c\frac{\dist(E,F)}{t}} \left[ \left( \iint_E |G|^2 \right)^\frac{1}{2} + \left( \iint_{\overline{E} \cap \Gamma} |g|^q \right)^\frac{1}{q} \right].
		\end{align}
	\end{lemma}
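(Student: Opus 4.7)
The proof should run in two steps, paralleling Proposition~\ref{prop:ODE}. For any $\phi = (G,g) \in \L^2(D)^n \oplus \L^q(\Gamma)$, set $u = \Theta_t \phi \in \W^{1,2}(D)$. Unwinding the definition of $\cL$, $S^*$, and $B$, the function $u$ is characterized by
\begin{align}
\langle u,v\rangle_{\L^2(D)} + t^2 a(u,v) = t \iint_D AG \cdot \overline{\nabla v} \d x + t \int_\Gamma bg \,\overline{\Tr(v)} \d \sigma
\end{align}
for every $v \in \W^{1,2}(D)$.

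\textbf{Uniform boundedness.} Test with $v = u$, take real parts, and apply the Gårding inequality~\eqref{eq:garding}. The right-hand side is bounded by $t\|A\|_\infty \|G\|_{\L^2} \|\nabla u\|_{\L^2} + t\|b\|_{\L^{\tilde q}(\Gamma)} \|g\|_{\L^q(\Gamma)}\|\Tr(u)\|_{\L^q(\Gamma)}$ using Hölder with the triple $(\tilde q, q, q)$ from Lemma~\ref{lem:parameter}; bound $\|\Tr(u)\|_{\L^q(\Gamma)} \lesssim \|u\|_{\W^{1,2}(D)}$ by Lemma~\ref{lem:parameter} and Lemma~\ref{lem:H=W}. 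A Young inequality $2t(\cdots)\|u\|_{\W^{1,2}} \leq \tfrac{\lambda t^2}{2}\|u\|_{\W^{1,2}}^2 + \tfrac{C}{\lambda}(\cdots)^2$ absorbs the gradient term and yields $\|u\|_{\L^2(D)}^2 + t^2 \|u\|_{\W^{1,2}(D)}^2 \lesssim \|G\|_{\L^2}^2 + \|g\|_{\L^q(\Gamma)}^2$, which establishes uniform $\L^2$ boundedness and in particular handles the regime $\dist(E,F) \leq \alpha t$ for any fixed $\alpha > 0$.

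\textbf{Off-diagonal decay.} Assume $d \coloneqq \dist(E,F) > \alpha t$. Pick $\rho \in \Cont^\infty(\R^n)$ with $\rho = 1$ on $F$, $\rho \equiv 0$ on an open neighborhood of $\overline{E}$, and $\|\rho\|_\infty + d\|\nabla \rho\|_\infty \lesssim 1$, and set $\eta \coloneqq \e^{d\rho/(\alpha t)} - 1$. Then $\eta$ and $\nabla \eta$ both vanish on that neighborhood of $\overline{E}$, so in particular the product $\eta^2 u$ has $\nabla(\eta^2 u) = 0$ on $\supp G \subseteq E$ and $\Tr(\eta^2 u) = \eta^2 \Tr(u) = 0$ on $\overline{E} \cap \Gamma \supseteq \supp g$. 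Testing with $v = \eta^2 u$ therefore \emph{annihilates the entire right-hand side}, leaving
\begin{align}
\|\eta u\|_{\L^2(D)}^2 + t^2 a(u,\eta^2 u) = 0.
\end{align}
Next, write $a(u,\eta^2 u) = a(\eta u, \eta u) + \mathcal R$, where $\mathcal R$ collects the product-rule errors involving $\nabla \eta$; crucially, since $\Tr(\eta^2 u) = \eta^2 \Tr(u) = \overline{\Tr(\eta u)} \Tr(\eta u) = \Tr(\eta u)\overline{\Tr(\eta u)}$ on $\Gamma$, the boundary part of $a$ contributes exactly $\int_\Gamma b|\Tr(\eta u)|^2 \d\sigma$ with no error, as in Proposition~\ref{prop:ODE}. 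Applying~\eqref{eq:garding} to $\eta u$, using $|\nabla \eta| \leq \tfrac{C}{\alpha t}|\eta + 1|$ from~\eqref{eq:eta_props}, and choosing $\alpha$ large in terms of $\|A\|_\infty$, $\lambda$, and $n$ to absorb the error terms exactly as in the proof of Proposition~\ref{prop:ODE}, we arrive at
\begin{align}
\|\eta u\|_{\L^2(D)}^2 \lesssim \iint_D |u|^2 \d x.
\end{align}
On $F$ we have $\eta \geq \tfrac{1}{2}\e^{d/(\alpha t)}$, so multiplying by $\e^{-d/(\alpha t)}$ and combining with the uniform bound from Step~1 yields the claimed off-diagonal estimate.

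\textbf{Expected obstacle.} The only delicate point is that $\eta^2 u$ must truly serve as a test function whose pairing with $tS^*B\phi$ vanishes. This forces $\eta$ to be constructed so that both $\eta$ and $\nabla \eta$ vanish on $\supp G \subseteq E$, not merely on $\partial E$; taking $\rho$ to vanish on an open neighborhood of $\overline{E}$ (rather than merely on $E$ itself) secures this cleanly, and simultaneously makes $\Tr(\eta^2 u)$ vanish on $\supp g \subseteq \overline{E} \cap \Gamma$. Once this localization is in place, everything reduces to the algebra already carried out for Proposition~\ref{prop:ODE}.
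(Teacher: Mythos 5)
Your proof is correct, but it takes a genuinely different route from the paper. The paper obtains both claims by duality: Proposition~\ref{prop:ODE} (applied to $L^*$) gives uniform boundedness and exponential off-diagonal decay for $tS(1+t^2L^*)^{-1}\colon \L^2(D) \to \L^2(D)^n \oplus \L^q(\Gamma)$; taking adjoints transfers this to $t(1+t^2\cL)^{-1}S^*\colon \L^2(D)^n\oplus \L^{q'}(\Gamma) \to \L^2(D)$; and composing with the multiplication operator $B$, which is bounded and local, yields the lemma for $\Theta_t = t(1+t^2\cL)^{-1}S^*B$ in two lines. You instead re-run the Davies--Gaffney cutoff argument directly for $\Theta_t$: test the weak characterization of $u = \Theta_t(G,g)$ against $\eta^2 u$, use the support separation to annihilate the source terms entirely, and absorb the commutator errors via the exponential weight exactly as in Proposition~\ref{prop:ODE}. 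Both are valid. The paper's duality argument is shorter and avoids repeating the absorption computation, at the modest cost of invoking $L^*$ and the transposition of off-diagonal bounds; yours is self-contained, makes transparent why the boundary term of $a$ is harmless (the trace commutes with the Lipschitz cutoff, so no boundary error appears), and identifies exactly the weak formulation $\langle u,v\rangle + t^2 a(u,v) = t\langle B(G,g), Sv\rangle$ that one would also need if attempting to extend the argument to situations where duality is less convenient. One small point worth making explicit: your choice of $\rho$ vanishing on a neighborhood of $\overline{E}$ is indeed available because $\dist(\overline{E},F) = \dist(E,F) = d > \alpha t > 0$, so the construction of $\rho$ with $d\|\nabla\rho\|_\infty \lesssim 1$ is unobstructed.
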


	\begin{proof}
		Since $L$ and $L^*$ are of the same type, Proposition~\ref{prop:ODE} yields that $t S (1 +t^2 L^*)^{-1}$ is $\L^2(D) \to \L^2(D)^n \oplus \L^q(\Gamma)$ bounded. Hence, by duality, $t(1 + t^2 \cL)^{-1} S^*$ is $\L^2(D)^n \oplus \L^{q'}(\Gamma) \to \L^2(D)$ bounded. Consequently, the first claim of the lemma follows from the mapping property of $B$ in~\eqref{eq:B}.

		Since $B$ is local and Proposition~\ref{prop:ODE} moreover yields off-diagonal estimates for $tS (1 + t^2 L)^{-1}$ with respect to the same norms as above, we deduce off-diagonal estimates for $\Theta_t$ by the same argument.
	\end{proof}

	It is well-known that Theorem~\ref{thm:main} is equivalent to the validity of the quadratic estimates
	\begin{align}
		\label{eq:qe1}
		\begin{aligned}
			\int_0^\infty \| tL(1+t^2 L)^{-1} u \|_{\L^2(D)}^2 \ddt &\lesssim \| u \|_{\W^{1,2}(D)}^2, \\
			\int_0^\infty \| tL^*(1+t^2 L^*)^{-1} u \|_{\L^2(D)}^2 \ddt &\lesssim \| u \|_{\W^{1,2}(D)}^2,
		\end{aligned}
	\end{align}
	see for instance~\cite[Prop.~12.7]{ISEM}. Note that $L^*$ is an elliptic operator with Robin boundary conditions associated with the coefficient matrix $B^*$. Since the ellipticity assumptions are invariant under conjugation, it suffices to verify only the first quadratic estimate in~\eqref{eq:qe1}.

	Let $u\in \W^{1,2}(D)$. Use the decomposition~\eqref{eq:factorization} to rewrite
	\begin{align}
		tL(1+t^2 L)^{-1} u = t (1+t^2 \cL)^{-1} \cL u = t (1+t^2 \cL)^{-1} S^* B S u = \Theta_t S u.
	\end{align}
	Therefore, the quadratic estimate for $L$ in~\eqref{eq:qe1} and thus Theorem~\ref{thm:main}, are equivalent to the quadratic estimate
	\begin{align}
		\label{eq:qe2}
		\tag{QE}
		\int_0^\infty \| \Theta_t Su \|_{\L^2(D)}^2 \ddt \lesssim \| u \|_{\W^{1,2}(D)}^2, \qquad u \in \W^{1,2}(D).
	\end{align}

	\subsection{Smoothing of the quadratic estimate}
	\label{subsec:smoothing}

	As explained in Remark~\ref{rem:weak_Laplacian}, we are going to introduce smoothing in terms of $S^*S$ into the quadratic estimate~\eqref{eq:qe2}.

	\begin{definition}
		For $t>0$, define the smoothing operator
		\begin{align}
			P_t \coloneqq (1 + t^2 S^*S)^{-1} \in \cL(\L^2(D)).
		\end{align}
	\end{definition}

	By simple algebraic manipulations with the smoothing operators, we find the following reduction.

	\begin{lemma}[Introduce smoothing]
		\label{lem:smoothing}
		There holds the quadratic estimate
		\begin{align}
			\int_0^\infty \| \Theta_t S (1-P_t)u \|_{\L^2(D)}^2 \ddt \lesssim \| u \|_{\W^{1,2}(D)}^2, \quad u \in \W^{1,2}(D).
		\end{align}
	\end{lemma}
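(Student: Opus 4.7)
The plan is to reduce $\Theta_t S(1-P_t)$ to the resolvent error for the self-adjoint operator $S^*S$, for which the desired quadratic estimate follows immediately from the spectral theorem. Starting from the factorization $\cL = S^*BS$ and the identity $(1+t^2\cL)^{-1}(1+t^2\cL) = \Id$ on $\W^{1,2}(D)$, I first establish the algebraic identity
\begin{align}
\Theta_t S u \;=\; t(1+t^2\cL)^{-1}\cL u \;=\; t^{-1}\bigl(u - R_t u\bigr), \qquad u \in \W^{1,2}(D),
\end{align}
where $R_t \coloneqq (1+t^2\cL)^{-1}$ (viewed as an isomorphism $(\W^{1,2}(D))^* \to \W^{1,2}(D)$ that restricts to the $\L^2$-resolvent $(1+t^2 L)^{-1}$). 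Since $P_t$ maps $\L^2(D)$ into the form domain $\W^{1,2}(D)$, $(1-P_t)u$ lies in $\W^{1,2}(D)$ and may legitimately be inserted, giving
\begin{align}
\Theta_t S(1-P_t)u \;=\; t^{-1}(1-P_t)u - t^{-1} R_t(1-P_t)u.
\end{align}

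For the second term I would use that $L$ is maximal accretive, so that $\|R_t\|_{\L^2(D) \to \L^2(D)} \leq 1$ uniformly in $t>0$. Combining this with the triangle inequality yields the pointwise bound
\begin{align}
\|\Theta_t S(1-P_t) u\|_{\L^2(D)} \;\leq\; 2 t^{-1}\|(1-P_t) u\|_{\L^2(D)}.
\end{align}
Next, rewriting $1-P_t = t^2 S^*S P_t$ gives $t^{-1}(1-P_t)u = tS^*S(1+t^2 S^*S)^{-1} u$, so the problem reduces to estimating $\int_0^\infty \|tS^*S(1+t^2 S^*S)^{-1} u\|_{\L^2(D)}^2 \,\ddt$. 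Because $S^*S$ is positive self-adjoint on $\L^2(D)$ (Remark~\ref{rem:weak_Laplacian}), the spectral theorem, together with $\int_0^\infty (t\lambda/(1+t^2\lambda))^2 \ddt = \lambda/2$, evaluates this integral exactly as $\tfrac{1}{2}\|(S^*S)^{1/2} u\|_{\L^2(D)}^2 = \tfrac{1}{2}\|Su\|_{\L^2(D)^n \oplus \L^2(\Gamma)}^2$. The last quantity equals $\tfrac{1}{2}\bigl(\|\nabla u\|_{\L^2(D)}^2 + \|\Tr u\|_{\L^2(\Gamma)}^2\bigr) \lesssim \|u\|_{\W^{1,2}(D)}^2$ by Proposition~\ref{prop:trace} and the embedding $\L^q(\Gamma) \hookrightarrow \L^2(\Gamma)$ (valid since $\Gamma$ is bounded and $q \geq 2$).

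There is no genuine analytic obstacle here; the lemma is essentially an algebraic manipulation that trades $\cL$ for the simpler, self-adjoint operator $S^*S$ and reduces everything to a one-line spectral identity. The only care needed is bookkeeping: verifying that the weak resolvent $(1+t^2\cL)^{-1}$ agrees with the $\L^2$-resolvent $(1+t^2 L)^{-1}$ on $\L^2(D)$, that the identity $\Theta_t S = t^{-1}(\Id - R_t)$ is valid on all of $\W^{1,2}(D)$, and that $(1-P_t)u$ remains in $\W^{1,2}(D)$ so $S$ may be applied to it. This is precisely the payoff of the choice of $S^*S$ as smoothing device announced in Remark~\ref{rem:weak_Laplacian}.
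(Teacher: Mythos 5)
Your proof is correct and takes essentially the same route as the paper: both reduce the estimate to uniform $\L^2$-boundedness of the $\cL$-resolvent (from maximal accretivity), a quadratic estimate for the self-adjoint operator $S^*S$, and the square root property for $S^*S$. The only cosmetic difference is that the paper factorizes $\Theta_t S(1-P_t) = t^2\cL(1+t^2\cL)^{-1}\cdot(t^2 S^*S)^{1/2}P_t(S^*S)^{1/2}$ and bounds the first factor uniformly, whereas you write $\Theta_t S(1-P_t)u = t^{-1}(1-R_t)(1-P_t)u$ and apply the triangle inequality before identifying $t^{-1}(1-P_t)=tS^*SP_t$ — two ways of peeling off the same two factors.
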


	\begin{proof}
		Observe the algebraic identity
		\begin{align}
			\Theta_t S(1-P_t) u = t (1 + t^2 \cL)^{-1} S^* B S t^2 S^* S P_t u = t^2 \cL (1 + t^2 \cL)^{-1} t (S^* S)^\frac{1}{2} P_t (S^* S)^\frac{1}{2} u,
		\end{align}
		where we used that fractional powers of $S^* S$ commute with $P_t$. Note that this argument uses the square root property of $S^*S$ in the self-adjoint case.
		Hence, using maximal accretivity, we find
		\begin{align}
			\int_0^\infty \| \Theta_t S (1-P_t)u \|_{\L^2(D)}^2 \ddt \leq 2 \int_0^\infty \| (t^2 S^* S)^\frac{1}{2} P_t (S^* S)^\frac{1}{2} u \|_{\L^2(D)}^2 \ddt.
		\end{align}
		The right-hand side is a square-function estimate for the self-adjoint operator $S^*S$ applied to $(S^* S)^\frac{1}{2} u$, so the spectral theorem gives
		\begin{align}
			\int_0^\infty \| \Theta_t S (1-P_t) u \|_{\L^2(D)}^2 \ddt \lesssim \| (S^* S)^\frac{1}{2} u \|_{\L^2(D)}^2.
		\end{align}
		Using the square root property for the self-adjoint operator $S^* S$ another time concludes the proof.
	\end{proof}

	\subsection{Reduction to small scales in the interior}
	\label{subsec:finite_time_volume}

	So far, we have reduced~\eqref{eq:qe2} to the quadratic estimate
	\begin{align}
		\label{eq:qe3}
		\int_0^\infty \| \Theta_t S P_t u \|_{\L^2(D)}^2 \ddt \lesssim \| u \|_{\W^{1,2}(D)}^2, \quad u \in \W^{1,2}(D).
	\end{align}
	Expanding the operator in the quadratic estimate yields
	\begin{align}
		\label{eq:volume_bdd_decomp}
		\Theta_t S P_t = -t (1 + t^2 \cL)^{-1} \Div A \nabla P_t + t (1 + t^2 \cL)^{-1} \Tr^* a \Tr P_t.
	\end{align}
	To ease notation, put
	\begin{align}
		\wt \Theta_t \coloneqq -t (1 + t^2 \cL)^{-1} \Div A \colon \L^2(D) \to \W^{1,2}(D).
	\end{align}
	Note that $\wt \Theta_t$ inherits off-diagonal estimates from $\Theta_t$.
	By definition, we have
	\begin{align}
		\Theta_t S P_t = \wt \Theta_t \nabla P_t + t (1 + t^2 \cL)^{-1} \Tr^* b \Tr P_t.
	\end{align}
	In this subsection, we will on the one hand treat the large scales $t \geq 1$ in~\eqref{eq:qe3}, and on the other hand reduce matters to the \enquote{interior component}, meaning the term $\wt \Theta_t$ acting only in $D$ and not on $\Gamma$.

	\begin{lemma}[Reduction to small scales]
		\label{lem:finite_scales}
		There holds the quadratic estimate
		\begin{align}
			\int_1^\infty \| \Theta_t S P_t u \|_{\L^2(D)}^2 \ddt \lesssim \| u \|_{\W^{1,2}(D)}^2, \quad u \in \W^{1,2}(D).
		\end{align}
	\end{lemma}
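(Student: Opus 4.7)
The plan is to exploit the factorization $\cL = S^*BS$ and reduce the operator in the integrand to a resolvent expression whose $\L^2$-norm decays like $1/t$ for large $t$. Since the support of integration is $[1,\infty)$ and we have a weighted measure $\mathrm{d}t/t$, a decay of order $1/t$ will suffice.

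First, I would observe that for any $v \in \W^{1,2}(D)$ one has the identity
\begin{align}
\Theta_t S v = t(1+t^2\cL)^{-1} S^* B S v = t(1+t^2\cL)^{-1} \cL v = \frac{1}{t}\bigl(v - (1+t^2\cL)^{-1} v\bigr).
\end{align}
Applying this with $v = P_t u \in \W^{1,2}(D)$, which lies in $\W^{1,2}(D)$ because $P_t$ maps $\L^2(D)$ into $\dom(S^*S) \subseteq \W^{1,2}(D)$, this identity yields
\begin{align}
\Theta_t S P_t u = \frac{1}{t}\bigl(P_t u - (1+t^2\cL)^{-1} P_t u\bigr).
\end{align}

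Second, I would use that both $P_t$ and $(1+t^2\cL)^{-1}$ are uniformly bounded on $\L^2(D)$: the former because $S^*S$ is self-adjoint and non-negative on $\L^2(D)$, the latter because $L$ is maximal accretive on $\L^2(D)$ and hence its resolvent satisfies $\|(1+t^2\cL)^{-1}\|_{\L^2 \to \L^2} \leq 1$. Combining these estimates gives
\begin{align}
\| \Theta_t S P_t u \|_{\L^2(D)} \lesssim \frac{1}{t} \| u \|_{\L^2(D)}.
\end{align}

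Finally, integrating this bound yields
\begin{align}
\int_1^\infty \| \Theta_t S P_t u \|_{\L^2(D)}^2 \ddt \lesssim \| u \|_{\L^2(D)}^2 \int_1^\infty \frac{\mathrm{d}t}{t^3} \lesssim \| u \|_{\L^2(D)}^2 \leq \| u \|_{\W^{1,2}(D)}^2,
\end{align}
which is the claim. There is really no significant obstacle here: the content of the lemma is that large scales are harmless, and this follows from a soft resolvent bound once the correct algebraic identity is identified. The substantive work in this section is reserved for the small-scale range $t \in (0,1)$, treated afterwards.
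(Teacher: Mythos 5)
Your proof is correct, and it takes a genuinely different—and more elementary—route than the paper's. The paper estimates $\| \Theta_t S P_t u \|_{\L^2(D)} \lesssim \| P_t u \|_{\W^{1,2}(D)}$ via $\L^2$-boundedness of $\Theta_t$ and the trace operator, then for $t \geq 1$ bounds this by $t^{\alpha} \| P_t u \|_{\H^{1+\alpha,2}(D)}$ using the embedding $\H^{1+\alpha,2}(D) \subseteq \W^{1,2}(D)$, invokes the coercivity estimate $\| (S^*S)^{\frac{1+\alpha}{2}} P_t u \|_{\L^2(D)} \gtrsim \| P_t u \|_{\H^{1+\alpha,2}(D)}$ from Proposition~\ref{prop:coercivity}, and concludes by the quadratic estimate and square root property for the self-adjoint operator $S^*S$. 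You instead use only the factorization $\cL = S^*BS$, the resolvent identity $t^2 (1+t^2 \cL)^{-1} \cL v = v - (1+t^2\cL)^{-1}v$, and the uniform $\L^2$-contractivity of both $P_t$ (since $S^*S$ is self-adjoint non-negative) and $(1+t^2L)^{-1}$ (by maximal accretivity), obtaining the decay $\| \Theta_t S P_t u \|_{\L^2(D)} \leq \tfrac{2}{t} \| u \|_{\L^2(D)}$, which integrates against $\mathrm{d}t/t$ on $[1,\infty)$. This is cleaner, avoids Proposition~\ref{prop:coercivity} entirely, and in fact shows that the smoothing operator $P_t$ plays no essential role at large scales (the same decay holds for $\Theta_t S v$ with any $v \in \W^{1,2}(D)$). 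The paper's more elaborate route has the incidental virtue of rehearsing, at large scales, the exact same coercivity mechanism that becomes genuinely necessary for the small-scale estimate in Lemma~\ref{lem:smooth_ppa_volume}, but your argument is valid and arguably preferable for this particular lemma.
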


	We remark that the proof implicitly exploits that $D$ is bounded.

	\begin{proof}
		Using $\L^2(D) \to \L^2(D)$ boundedness of $\Theta_t$, followed by boundedness of the trace operator, calculate for $t>0$:
		\begin{align}
			\| \Theta_t S P_t u \|_{\L^2(D)} &\lesssim \| \nabla P_t u \|_{\L^2(D)^n} + \| \Tr(P_t u) \|_{\L^q(\Gamma)} \lesssim \| P_t u \|_{\W^{1,2}(D)}.
		\end{align}
		Specializing to $t \geq 1$ and using the embedding $\H^{1+\alpha,2}(D) \subseteq \W^{1,2}(D)$, conclude
		\begin{align}
			\| \Theta_t S P_t u \|_{\L^2(D)}^2 \lesssim t^{2\alpha} \| P_t u \|_{\H^{1+\alpha,2}(D)}^2,
		\end{align}
		where $\alpha$ was introduced in Proposition~\ref{prop:coercivity}.
		Now, using Proposition~\ref{prop:coercivity}, we obtain
		\begin{align}
			\| \Theta_t S P_t u \|_{\L^2(D)}^2 \lesssim t^{2\alpha} \| (S^*S)^{\frac{1+\alpha}{2}} P_t u \|_{\L^2(D)}^2 = \| (t^2 S^*S)^{\frac{\alpha}{2}} P_t (S^*S)^{\frac{1}{2}} u \|_{\L^2(D)}^2.
		\end{align}
		Eventually, quadratic estimates and the square root property for $S^*S$ yield
		\begin{align}
				\int_1^\infty \| \Theta_t S P_t u \|_{\L^2(D)}^2 \ddt \lesssim \int_0^\infty \| (t^2 S^*S)^{\frac{\alpha}{2}} P_t (S^*S)^{\frac{1}{2}} u \|_{\L^2(D)}^2 \ddt
				&\lesssim \| (S^*S)^{\frac{1}{2}} u \|_{\L^2(D)}^2 \\
				&\approx \| u \|_{\W^{1,2}(D)}^2
		\end{align}
		as desired.
	\end{proof}

	By virtue of the mapping properties of the trace operator, the quadratic estimate on the boundary is non-singular and can hence be controlled easily.

	\begin{lemma}[Reduction to the interior]
		\label{lem:volume}
		There holds the quadratic estimate
		\begin{align}
			\int_0^1 \| t (1 + t^2 \cL)^{-1} \Tr^* a \Tr P_t u \|_{\L^2(D)}^2 \ddt \lesssim \| u \|_{\W^{1,2}(D)}^2, \quad u \in \W^{1,2}(D).
		\end{align}
	\end{lemma}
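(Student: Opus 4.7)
The integrand involves the ``non-singular'' boundary-supported piece of $\Theta_t S P_t$, and the goal is to show that replacing the interior test datum by a trace gives enough smoothing in $t$ to make the singular measure $\ddt$ integrable on $(0,1)$. The core of the argument is an energy estimate for $\phi_t \coloneqq (1+t^2\cL)^{-1}\Tr^*(b\Tr P_t u)$, combined with Sobolev interpolation of the trace norm between $\L^2(D)$ and $\W^{1,2}(D)$; the exponent $s \in (s_*,1)$ provided by Lemma~\ref{lem:parameter} gives exactly the power $t^{1-s}$ needed.

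\textbf{Step 1 (setup).} Write $\phi_t \coloneqq (1+t^2\cL)^{-1}\Tr^*(b\Tr P_t u) \in \W^{1,2}(D)$, so the quantity to estimate is $\|t\phi_t\|_{\L^2(D)}^2$. Test the equation $\phi_t + t^2\cL\phi_t = \Tr^*(b\Tr P_t u)$ against $\phi_t$ to obtain
\begin{align}
\|\phi_t\|_{\L^2(D)}^2 + t^2 a(\phi_t,\phi_t) = \int_\Gamma b\,\Tr(P_t u)\overline{\Tr(\phi_t)}\,\d\sigma.
\end{align}
Using the Gårding inequality~\eqref{eq:garding} on the left, and Hölder together with Lemma~\ref{lem:parameter} on the right, this yields
\begin{align}
\|\phi_t\|_{\L^2(D)}^2 + \lambda t^2 \|\phi_t\|_{\W^{1,2}(D)}^2 \lesssim \|b\|_{\L^{\tilde q}(\Gamma)} \|\Tr(P_t u)\|_{\L^q(\Gamma)} \|\Tr(\phi_t)\|_{\L^q(\Gamma)}.
\end{align}

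\textbf{Step 2 (uniform bound on $P_t u$).} Recall that $S^*S$ is self-adjoint with $\dom((S^*S)^{1/2}) = \W^{1,2}(D)$ and $\|(S^*S)^{1/2}v\|_{\L^2(D)} \approx \|v\|_{\W^{1,2}(D)}$ (Step~1 of Proposition~\ref{prop:coercivity}). Since $P_t$ commutes with $(S^*S)^{1/2}$ and is a contraction on $\L^2(D)$, it follows that $\|P_t u\|_{\W^{1,2}(D)} \lesssim \|u\|_{\W^{1,2}(D)}$ uniformly in $t$. Combined with Proposition~\ref{prop:trace} applied at $r = q$, this gives
\begin{align}
\|\Tr(P_t u)\|_{\L^q(\Gamma)} \lesssim \|P_t u\|_{\H^{s,2}(D)} \lesssim \|u\|_{\W^{1,2}(D)}.
\end{align}

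\textbf{Step 3 (interpolation of the trace of $\phi_t$).} Interpolating between $\L^2(D) = \H^{0,2}(D)$ and $\H^{1,2}(D) = \W^{1,2}(D)$ and using the trace bound once more,
\begin{align}
\|\Tr(\phi_t)\|_{\L^q(\Gamma)} \lesssim \|\phi_t\|_{\H^{s,2}(D)} \lesssim \|\phi_t\|_{\L^2(D)}^{1-s}\|\phi_t\|_{\W^{1,2}(D)}^s.
\end{align}
Plugging this together with Step~2 into the inequality from Step~1 yields, with $A \coloneqq \|\phi_t\|_{\L^2(D)}$ and $B \coloneqq t\|\phi_t\|_{\W^{1,2}(D)}$,
\begin{align}
A^2 + \lambda B^2 \lesssim \|u\|_{\W^{1,2}(D)}\, A^{1-s} B^s t^{-s}.
\end{align}
Since $A^{1-s}B^s \leq (A^2+B^2)^{1/2}$, this gives $(A^2+B^2)^{1/2} \lesssim t^{-s}\|u\|_{\W^{1,2}(D)}$, hence
\begin{align}
\|t\phi_t\|_{\L^2(D)} = tA \leq t(A^2+B^2)^{1/2} \lesssim t^{1-s}\|u\|_{\W^{1,2}(D)}.
\end{align}

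\textbf{Step 4 (integration).} Finally,
\begin{align}
\int_0^1 \|t\phi_t\|_{\L^2(D)}^2 \ddt \lesssim \|u\|_{\W^{1,2}(D)}^2 \int_0^1 t^{1-2s}\,\d t \lesssim \|u\|_{\W^{1,2}(D)}^2,
\end{align}
where the integral is finite because $s < 1$. The main point (and the place where the choice $s\in(s_*,1)$ is crucial) is that interpolation of the trace at a Sobolev scale \emph{strictly below} $\W^{1,2}$ produces the gain $t^{1-s}$, converting the otherwise logarithmically divergent integral at the origin into a convergent one.
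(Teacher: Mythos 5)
Your proof is correct and arrives at the same decay rate $t^{1-s}$ as the paper, but by a genuinely different route. The paper's argument is operator-theoretic: it interpolates the resolvent bounds $\| (1+t^2L^*)^{-1}\|_{\L^2\to\L^2}\lesssim 1$ and $\|(1+t^2L^*)^{-1}\|_{\L^2\to\H^{1,2}}\lesssim t^{-1}$ (which come from the off-diagonal estimates, Proposition~\ref{prop:ODE}) to get $\L^2\to\H^{s,2}$ with norm $t^{-s}$, composes with the trace, and then \emph{dualizes} to conclude that $t(1+t^2\cL)^{-1}\Tr^*$ maps $\L^{q'}(\Gamma)\to\L^2(D)$ with norm $\lesssim t^{1-s}$; it remains only to bound $\|b\Tr P_t u\|_{\L^{q'}(\Gamma)}$ uniformly, which is done exactly as in your Step~2. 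You instead run a direct energy estimate on $\phi_t$ — test against $\phi_t$, apply G\rr{a}rding on the left — and then use the log-convexity of the Bessel scale, $\|\phi_t\|_{\H^{s,2}(D)}\lesssim\|\phi_t\|_{\L^2(D)}^{1-s}\|\phi_t\|_{\W^{1,2}(D)}^s$, to close the estimate by absorption (the observation $A^{1-s}B^s\leq(A^2+B^2)^{1/2}$). The two arguments are dual-coloured versions of the same idea: the paper interpolates \emph{operator} norms and then applies Hölder on the test datum, while you interpolate \emph{function} norms of the output and absorb. Your route avoids any explicit appeal to the duality between $\cL$ and $L^*$ and is arguably more self-contained, at the cost of a small bookkeeping step; the paper's route is shorter because the duality does the absorption automatically. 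One presentational remark: in Step~1 you should take real parts before invoking~\eqref{eq:garding}, since $a(\phi_t,\phi_t)$ need not be real; this is cosmetic and does not affect the argument. Both proofs ultimately rest on the same three ingredients: $1-s>0$ from Lemma~\ref{lem:parameter}, the trace bound $\H^{s,2}(D)\to\L^q(\Gamma)$, and the commutation of $P_t$ with $(S^*S)^{1/2}$ together with the self-adjoint Kato property of $S^*S$.
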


	\begin{proof}
		By Proposition~\ref{prop:ODE}, the resolvent family $(1 + t^2 L^*)^{-1}$ is  $\L^2(D) \to \L^2(D)$ bounded with norm controlled by $1$, and $\L^2(D) \to \H^1(D)$ bounded with norm controlled by $t^{-1}$. Hence, by interpolation, it is $\L^2(D) \to \H^{s,2}(D)$ bounded with norm controlled by $t^{-s}$, where $s \in (0,1)$ was introduced below Lemma~\ref{lem:parameter}. Thus, using boundedness of the trace operator as stated in Lemma~\ref{lem:parameter}, $\Tr(1 + t^2 L^*)^{-1}$ is $\L^2(D) \to \L^q(\Gamma)$ bounded with norm controlled by $t^{-s}$. Consequently, a duality argument, followed by the mapping properties of the coefficient function $a$ and the trace operator, give
		\begin{align}
			\| t (1 + t^2 \cL)^{-1} \Tr^* b \Tr P_t u \|_{\L^2(D)}^2 \lesssim t^{2(1-s)} \| b \Tr P_t u \|_{\L^{q'}(\Gamma)}^2 \lesssim t^{2(1-s)} \| P_t u \|_{\H^{s,2}(D)}^2.
		\end{align}
		Next, we use the embedding $\W^{1,2}(D) \subseteq \H^{s,2}(D)$ and two times the square root property for $S^*S$, to give
		\begin{align}
			\| P_t u \|_{\H^{s,2}(D)} \lesssim \| P_t u \|_{\W^{1,2}(D)} = \| (S^*S)^\frac{1}{2} P_t u \|_{\L^2(D)} = \| P_t (S^*S)^\frac{1}{2} u \|_{\L^2(D)} \lesssim \| u \|_{\W^{1,2}(D)}.
		\end{align}
		Thus,
		\begin{align}
			\| t (1 + t^2 \cL)^{-1} \Tr^* b \Tr P_t u \|_{\L^2(D)}^2 \lesssim t^{2(1-s)} \| u \|_{\W^{1,2}(D)}^2,
		\end{align}
		and therefore
		\begin{align}
			\int_0^1 \| t (1 + t^2 \cL)^{-1} \Tr^* b \Tr P_t u \|_{\L^2(D)}^2 \ddt \lesssim \int_0^1 t^{2(1-s)} \| u \|_{\W^{1,2}(D)}^2 \ddt
			\approx \| u \|_{\W^{1,2}(D)}^2,
		\end{align}
		where we use $1-s > 0$ in the last step.
		This concludes the proof.
	\end{proof}

	\subsection{Dyadic structures and dyadic averaging operator}
	\label{subsec:dyadic}

	We introduce a substitute in $D$ for the dyadic cubes in Euclidean space, called \enquote{dyadic structure}, and present basic properties of it. Then, we define the dyadic averaging operator induced by the dyadic structure, and present the Carleson embedding result for it.

	The construction of dyadic structures goes back to~\cite{ChristDyadic}. For our formulation here, we combine~\cite[Thm.~7.4]{Laplace-Extrapolation} with the boundary estimate~\cite[Lem.~5.2]{BEH} and the rescaling argument from~\cite[Prop. 2.12]{DeltaArg}.
	Write $\Q(x,\ell)$ for the Euclidean cube with center $x\in \R^n$ and sidelength $\ell > 0$ parallel to the coordinate axes.

	\begin{proposition}[Dyadic structure]
		\label{prop:cubes}
		There exists a collection $\{Q^k_i \colon k \in \N_0, i \in I_k\}$ of open
		sets, where $I_k$ are countable index sets, and constants $a_0$, $a_1$, $\eta$, $C > 0$ such that the following hold:
		\begin{enumerate}
			\item For each $k \in \N_0$, it holds $|D \setminus \cup_{i \in I_k} Q^k_i| = 0$.
			\item If $\ell \geq k$, then, for each $i \in I_k$ and each $j \in I_\ell$, either $Q^\ell_j \subseteq Q^k_i$ or $Q^\ell_j \cap Q^k_i = \emptyset$.
			\item If $\ell \leq k$, then, for each $i \in I_k$, there is a unique $j \in I_\ell$ such that $Q^k_i \subseteq Q^\ell_j$.
			\item For each $Q^k_i$, $k \in \N_0$, $i \in I_k$, there exists $z^k_i \in D$ such that $$\Q(z^k_i, a_0 2^{-k}) \cap D \subseteq Q^k_i \subseteq \Q(z^k_i, a_1 2^{-k}) \cap D.$$
			\item If $k \in \N_0$, $i \in I_k$, and $t>0$, then
			\begin{align}
				\bigl|\{x \in Q^k_i \colon \dist(x, \R^n \setminus Q^k_i) \leq t 2^{-k}\}\bigr| \leq Ct^{\eta}|Q^k_i|.
			\end{align}
		\end{enumerate}
	\end{proposition}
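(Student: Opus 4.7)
The plan is essentially to assemble the statement from three pre-existing ingredients, following the strategy already indicated above the proposition: Christ's dyadic cube construction on spaces of homogeneous type, a boundary-layer estimate tailored to locally uniform sets, and a rescaling of generations to align sidelengths with powers of $2$. Since none of these are new, the work consists in checking that our geometric hypotheses make each ingredient applicable and that they are compatible.

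First, I would verify that $(D, |\cdot|, dx)$ is a space of homogeneous type at scales $r \in (0,1]$. Assumption~\ref{ass:D}(i) yields the interior thickness condition \eqref{eq:ITC} by~\cite[Prop.~2.9]{BEH}, which combined with the Lebesgue doubling property on $\R^n$ gives $|D \cap \B(x,2r)| \lesssim |D \cap \B(x,r)|$ uniformly for $x \in D$ and $r \in (0,1]$. Because $D$ is bounded, only finitely many coarse scales matter, so doubling up to scale $1$ is all we need.

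Second, I would invoke \cite[Thm.~7.4]{Laplace-Extrapolation} to produce, for some base $\delta \in (0,1)$, a system of open \enquote{cubes} $\{\wt Q^k_i\}$ satisfying the analogues of (i)--(iv) with the sidelength $a_1 2^{-k}$ replaced by $a_1 \delta^k$ and the ball $\Q(z,a_0 2^{-k})$ replaced by $\Q(z,a_0 \delta^k)$. Properties (i)--(iii) are exactly the nested/partition structure produced by Christ's construction; property (iv) encodes the sandwich of each $\wt Q^k_i$ between an inner and outer Euclidean cube centered at a distinguished point, which uses interior thickness to guarantee the existence of $z^k_i \in D$ with a non-trivial portion of $D$ around it at the relevant scale. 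To upgrade from base $\delta$ to base $\tfrac{1}{2}$ I would apply the rescaling procedure of~\cite[Prop.~2.12]{DeltaArg}: for each $k \in \N_0$ let $m(k)$ be the unique integer with $\delta^{m(k)+1} < 2^{-k} \leq \delta^{m(k)}$, and set $Q^k_i \coloneqq \wt Q^{m(k)}_i$. Nestedness, partitioning, and the inner/outer ball sandwich (with constants $a_0, a_1$ now adjusted by a factor $\delta$) are preserved by this re-indexing, yielding (i)--(iv) in the form stated.

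Third, for the small-boundary estimate (v) I would quote \cite[Lem.~5.2]{BEH}, which is formulated precisely for the Christ cubes arising in the locally uniform setting and gives the power-type decay $|\{x \in \wt Q^k_i \colon \dist(x, \R^n \setminus \wt Q^k_i) \leq t \delta^k\}| \leq C t^\eta |\wt Q^k_i|$ with a uniform $\eta > 0$. This estimate passes without alteration through the re-indexing $k \mapsto m(k)$, since $t \delta^{m(k)} \approx t \cdot 2^{-k}$ with a comparability constant absorbed into a redefined $t$ (and hence into $C$ after adjusting $\eta$).

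The main obstacle, if any, is purely bookkeeping: tracking constants through the rescaling and verifying that the small-boundary exponent $\eta$ from~\cite{BEH} survives without degradation. No new analysis is required.
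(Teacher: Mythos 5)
Your proposal follows exactly the route the paper indicates — the paper itself gives no separate proof but rather points to the combination of Christ's construction via \cite[Thm.~7.4]{Laplace-Extrapolation}, the small-boundary estimate \cite[Lem.~5.2]{BEH}, and the rescaling \cite[Prop.~2.12]{DeltaArg}, which is precisely what you assemble and verify. One small slip: in the final step you say the comparability constant is \enquote{absorbed into $C$ after adjusting $\eta$}; no adjustment to $\eta$ is needed (or correct), and in fact since $2^{-k}\leq\delta^{m(k)}$ the boundary-layer set at scale $t2^{-k}$ is already contained in the one at scale $t\delta^{m(k)}$, so the bound carries over with the same $C$ and $\eta$ unchanged.
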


	Note that all properties except (v) work for $D$ merely interior thick. For (v), a mild form of boundary regularity, called \emph{porosity}, is needed. Locally uniform domains have a porous boundary.

	\begin{definition}[Dyadic cubes]
		\label{def:dyadic_cubes}
		In the setting of Proposition~\ref{prop:cubes}, put $\Box_{2^{-k}} \coloneqq \{Q^k_i \colon i \in I_k \}$. Members of this collection are called \emph{dyadic cubes} of \emph{generation $2^{-k}$}. For $t \in (2^{-(k+1)},2^{-k}]$, define $\Box_{t} \coloneqq \Box_{2^{-k}}$. In this case, the \emph{sidelength} of $Q \in \Box_{t}$ is $\ell(Q) \coloneqq 2^{-k}$. Finally, the collection $\Box$ of all (local) \emph{dyadic cubes} is given by $\bigcup_k \Box_{2^{-k}}$.
	\end{definition}

	Using metric arguments, a doubling operation on cubes can be defined.

	\begin{definition}[Double cube]
		If $Q \in \Box$, then
		define the \emph{doubled cube} $2Q$ via $2Q \coloneqq \{ x \in D \colon \dist(x, Q) < \tfrac{1}{2} \ell(Q) \}$.
	\end{definition}

	Using property~(iv) of dyadic cubes, interior thickness of $D$, and elementary geometric considerations, we find $|2Q| \lesssim |Q|$.

	\begin{definition}
		For $t \in (0,1]$, define the \emph{dyadic averaging operator} $\AvOp_t$ on $\L^2(D)^n$ by
		\begin{align}
			\AvOp_t(F)(x) \coloneqq \fiint_{Q_t} F,
		\end{align}
		where $Q_t$ is the unique set in $\Box_t$ containing $x$, meaningful for almost every $x \in D$.
	\end{definition}

	In order to formulate a boundedness result for the dyadic averaging operator below, we need to introduce the notation of a Carleson measure first.

	\begin{definition}
		Call a Borel measure $\mu$ on $D \times (0,1]$ a \emph{Carleson measure} if there exists a constant $C$ such that
		\begin{align}
			\label{eq:def carleson measure}
			\mu(R(Q)) \leq C |Q|, \quad Q \in \Box.
		\end{align}
		Here, $R(Q)$ denotes the Carleson box over $Q$ and is given by $R(Q) \coloneqq (0,\ell(Q)] \times Q \subseteq (0,1] \times D$. The Carleson norm $\| \mu \|_\mathcal{C}$ of $\mu$ is the infimum over all constants $C$ for which~\eqref{eq:def carleson measure} holds.
	\end{definition}

	The following version of the Carleson embedding for the dyadic averaging operator in a dyadic structure can be found in~\cite[Thm 4.3]{Morris}.

	\begin{proposition}[Carleson embedding]
		\label{prop:carleson_embedding}
		If $\mu$ is a Carleson measure on $D \times (0,1]$ with Carleson norm $\| \sigma \|_{\mathcal{C}}$, then
		\begin{align}
			\iiint_{D \times (0,1]} |(\AvOp_t u)(x)|^2 \d \mu(x,t) \lesssim \| \sigma \|_{\mathcal{C}} \|u \|_{\L^2(D)}^2.
		\end{align}
	\end{proposition}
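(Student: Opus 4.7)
The plan is to reduce the Carleson embedding to the standard coupling between the dyadic averaging operator $\AvOp_t$ and the associated dyadic maximal operator, adapted to the dyadic structure from Proposition~\ref{prop:cubes}. First I would introduce
\begin{align}
M_\Box u(x) \coloneqq \sup_{Q \in \Box,\, x \in Q} \fiint_Q |u|,
\end{align}
which is pointwise well-defined for a.e.\ $x \in D$ by covering property (i). By the triangle inequality and the definition of $\AvOp_t$, the pointwise bound $|\AvOp_t u(x)| \leq M_\Box u(x)$ holds for a.e.\ $x \in D$ and every $t \in (0,1]$.

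Next, for each $\lambda > 0$ I would use the nesting properties (ii)--(iii) together with the fact that $u \in \L^2(D)$ (so that averages of $|u|$ over sufficiently large cubes are arbitrarily small) to extract the collection $\{Q_j\}_j$ of maximal dyadic cubes satisfying $\fiint_{Q_j} |u| > \lambda$; these are pairwise disjoint with $\{M_\Box u > \lambda\} = \bigsqcup_j Q_j$. If now $(x,t) \in D \times (0,1]$ satisfies $|\AvOp_t u(x)| > \lambda$, then the unique $Q_t \in \Box_t$ containing $x$ has $\fiint_{Q_t} |u| \geq |\AvOp_t u(x)| > \lambda$, so by maximality $Q_t \subseteq Q_j$ for some $j$, whence $(x,t) \in R(Q_j)$. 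The Carleson hypothesis therefore yields
\begin{align}
\mu\bigl(\{(x,t) : |\AvOp_t u(x)| > \lambda\}\bigr) \leq \sum_j \mu(R(Q_j)) \leq \|\mu\|_\mathcal{C} \sum_j |Q_j| = \|\mu\|_\mathcal{C}\, \bigl|\{M_\Box u > \lambda\}\bigr|.
\end{align}

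Combining this with the layer-cake identity then gives
\begin{align}
\iiint_{D \times (0,1]} |\AvOp_t u|^2 \d \mu \leq 2 \|\mu\|_\mathcal{C} \int_0^\infty \lambda\, \bigl|\{M_\Box u > \lambda\}\bigr| \d \lambda = \|\mu\|_\mathcal{C} \| M_\Box u \|_{\L^2(D)}^2,
\end{align}
and the proof is finished by the $\L^2(D)$ boundedness of $M_\Box$. The only technical point worth flagging is that this last step requires transplanting the Hardy--Littlewood maximal theorem to the Christ-type dyadic structure on $D$; the standard weak-$(1,1)$ covering argument goes through because property~(iv) of Proposition~\ref{prop:cubes} together with the interior thickness of $D$ give $|Q| \approx 2^{-kn}$ for $Q \in \Box_{2^{-k}}$, and hence comparability of the measures of a dyadic cube and its parent. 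This is the only place where the non-Euclidean nature of the setting enters; everything else is a purely combinatorial consequence of the dyadic axioms (i)--(iii) and the Carleson condition.
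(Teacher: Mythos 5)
Your argument is correct and it is essentially the standard good-$\lambda$--free proof of the Carleson embedding via the dyadic maximal function; the paper itself does not prove Proposition~\ref{prop:carleson_embedding} but simply cites~\cite[Thm~4.3]{Morris}, and your proof is in the same spirit as the argument one would find there. The chain ``maximal dyadic cubes $\Rightarrow$ distributional inequality $\Rightarrow$ layer-cake $\Rightarrow$ $\L^2$-bound for $M_\Box$'' is complete and each step is sound: the maximal cubes exist because $\Box$ has a coarsest generation $k=0$ and $D$ has finite measure, they are pairwise disjoint by the nesting axioms~(ii)--(iii), and the inclusion of the super-level set of $\AvOp_t u$ into $\bigcup_j R(Q_j)$ uses only that $t \le \ell(Q_t) \le \ell(Q_j)$.

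One remark is slightly off, though not in a way that damages the proof. You state that the $\L^2(D)$-boundedness of $M_\Box$ requires ``transplanting'' a covering argument using property~(iv) of Proposition~\ref{prop:cubes} and interior thickness to obtain $|Q| \approx 2^{-kn}$. In fact, none of that is needed: the \emph{dyadic} maximal operator is weak-$(1,1)$ by a purely combinatorial argument. The super-level set $\{M_\Box u > \lambda\}$ is the disjoint union of the maximal cubes $Q_j$, and
\begin{align}
|\{M_\Box u > \lambda\}| = \sum_j |Q_j| \le \frac{1}{\lambda}\sum_j \iint_{Q_j} |u| \le \frac{1}{\lambda}\|u\|_{\L^1(D)},
\end{align}
using only the nesting axioms~(i)--(iii) and no geometry whatsoever; Marcinkiewicz interpolation with the trivial $\L^\infty$ bound then gives the $\L^2$ estimate. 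Doubling, property~(iv), and porosity are genuinely used elsewhere in the paper (for the doubled cubes $2Q$, and for the interpolation inequality~\eqref{eq:interpolation_inequality}), but not here. So your proof is correct, and in fact simpler than you make it sound.
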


	\subsection{Smoothed principal part approximation}
	\label{subsec:smoothed_ppa}

	In the previous subsection, we have reduced~\eqref{eq:qe2} to the validity of the quadratic estimate
	\begin{align}
		\label{eq:qe4}
		\int_0^1 \| \wt \Theta_t \nabla P_t u \|_{\L^2(D)}^2 \ddt \lesssim \| u \|_{\W^{1,2}(D)}^2, \quad u \in \W^{1,2}(D).
	\end{align}
	Next, we reduce this quadratic estimate to its \enquote{smoothed principal part}. The latter is defined in the subsequent definition.

	Using off-diagonal estimates, the operator $\wt \Theta_t$ can be extended to a well-defined, linear operator $\L^\infty(D)^n \to \L^2_{\loc}(D)$ uniformly for $t > 0$, see~\cite{Kato-Square-Root-Proof}. The subsequent definition uses this extension.

	\begin{definition}[Principal part]
		Identifying the standard unit vectors $e_1,\dots,e_n \in \C^n$ with their respective constant functions on $D$, define for $t \in (0,1]$ the \emph{principal part} of $\wt \Theta_t$ by
		\begin{align}
			\wt \gamma_t \coloneqq (\wt \Theta_t(e_1), \dots, \wt \Theta_t(e_n)) \in \L^2_{\loc}(D)^n.
		\end{align}
	\end{definition}

	\begin{remark}
		For almost every $x\in D$, the principal part $\wt \gamma_t(x)$ is a linear functional on $\C^n$ acting as
		\begin{align}
			\wt \gamma_t(x) w = \wt \Theta_t(w\ind_D)(x), \quad w\in \C^n.
		\end{align}
	\end{remark}

	The following bound is established as in~\cite{Kato-Square-Root-Proof}.

	\begin{lemma}
		\label{lem:ppa_uniform_bound}
		For all $t \in (0,1]$ and $F \in \L^2(D)^n$, we have
		\begin{align}
			\| \wt \gamma_t \AvOp_t F \|_{\L^2(D)} \lesssim \| F \|_{\L^2(D)^n}.
		\end{align}
	\end{lemma}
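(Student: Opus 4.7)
The plan is a standard Kato-type principal part argument: exploit that $\AvOp_t F$ is piecewise constant on the dyadic partition $\Box_t$ to pass from an estimate on the singular operator acting on $F$ to a cube-wise estimate, which then follows from $\L^2$-boundedness and off-diagonal decay of $\wt\Theta_t$.

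First, I would fix $t\in(0,1]$ and, for each $Q\in\Box_t$, set $c_Q\coloneqq\fiint_Q F\in\C^n$. Since $\Box_t$ partitions $D$ up to a null set (Proposition~\ref{prop:cubes}(i)), one has $(\AvOp_t F)(x)=c_{Q(x)}$ with $Q(x)$ the dyadic cube containing $x$. By the definition of $\wt\gamma_t$ together with linearity of the $\L^\infty(D)^n\to\L^2_{\loc}(D)$ extension of $\wt\Theta_t$, we have
\begin{align}
\ind_Q(x)\,\bigl(\wt\gamma_t(x)\,\AvOp_t F(x)\bigr)=\ind_Q(x)\,\wt\Theta_t(c_Q\ind_D)(x),\qquad x\in D.
\end{align}
Decomposing $\|\wt\gamma_t\AvOp_t F\|_{\L^2(D)}^2=\sum_{Q\in\Box_t}\|\ind_Q\wt\Theta_t(c_Q\ind_D)\|_{\L^2}^2$, summing, and invoking Jensen's inequality $\sum_Q|c_Q|^2|Q|\le\sum_Q\int_Q|F|^2=\|F\|_{\L^2(D)^n}^2$, the task reduces to the cube-wise bound
\begin{align}
\bigl\|\ind_Q\wt\Theta_t(c_Q\ind_D)\bigr\|_{\L^2(D)}^2\lesssim|c_Q|^2\,|Q|,\qquad Q\in\Box_t.\tag{$\star$}
\end{align}

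To prove $(\star)$, I would split $\ind_D=\ind_{2Q}+\sum_{k\ge 1}\ind_{A_k(Q)}$, where $A_k(Q)\coloneqq(2^{k+1}Q\setminus 2^kQ)\cap D$. The near piece is handled by the uniform $\L^2(D)\to\L^2(D)$ boundedness of $\wt\Theta_t$ (inherited from Lemma~\ref{lem:Theta_t_bounded}, since $\wt\Theta_t$ is the first summand in \eqref{eq:volume_bdd_decomp}), giving $\|\wt\Theta_t(c_Q\ind_{2Q})\|_{\L^2}^2\lesssim|c_Q|^2|2Q|\lesssim|c_Q|^2|Q|$, where the comparison $|2Q|\lesssim|Q|$ follows from Proposition~\ref{prop:cubes}(iv) together with interior thickness. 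For the far pieces, the off-diagonal estimate that $\wt\Theta_t$ inherits from Proposition~\ref{prop:ODE}, combined with $\dist(Q,A_k(Q))\gtrsim 2^k\ell(Q)\sim 2^kt$, yields
\begin{align}
\bigl\|\ind_Q\wt\Theta_t(c_Q\ind_{A_k(Q)})\bigr\|_{\L^2}^2\lesssim\e^{-c\,2^k}\,|c_Q|^2\,|A_k(Q)|\lesssim\e^{-c\,2^k}\,2^{kn}\,|c_Q|^2\,|Q|,
\end{align}
which is summable in $k$ thanks to the rapid decay of $\e^{-c\,2^k}$ against $2^{kn}$.

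The only real subtlety is the geometric bookkeeping for the annular decomposition: translating the off-diagonal bound from Proposition~\ref{prop:ODE} (stated for arbitrary measurable subsets of $D$) into decay in terms of Euclidean annuli around a dyadic cube $Q\subseteq D$, and absorbing the polynomial dilation loss $2^{kn}|Q|$ into the exponential factor. Both are routine given the comparison of $Q$ with a Euclidean cube via Proposition~\ref{prop:cubes}(iv) and interior thickness. Once $(\star)$ is established, summing over $Q\in\Box_t$ and applying Jensen as above concludes the proof, uniformly in $t\in(0,1]$.
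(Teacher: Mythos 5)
Your proposal is the standard principal-part bound that the paper itself does not spell out but simply cites from~\cite{Kato-Square-Root-Proof}: exploit that $\AvOp_t F$ is constant on each $Q\in\Box_t$ to write $\ind_Q\,\wt\gamma_t\AvOp_t F=\ind_Q\,\wt\Theta_t(c_Q\ind_D)$, then prove the cube-wise bound by an annular near/far decomposition using uniform $\L^2$-boundedness for the near piece and the exponential off-diagonal decay of $\wt\Theta_t$ for the far pieces, with Jensen closing the sum over $Q$. The only cosmetic imprecision is that after Minkowski over the annuli you should square the resulting series rather than summing squares, but since $\sum_k \e^{-c2^k}2^{kn/2}<\infty$ the estimate closes either way; the argument matches the reference the paper defers to.
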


	As a further reduction step, for $0 < t \leq 1$ decompose
	\begin{align}
		\wt \Theta_t = (\wt \Theta_t - \wt \gamma_t \AvOp_t) + \wt \gamma_t \AvOp_t.
	\end{align}

	\begin{lemma}[Smooth principal part approximation]
		\label{lem:smooth_ppa_volume}
		There holds the quadratic estimate
		\begin{align}
			\int_0^1 \| (\wt \Theta_t - \wt \gamma_t \AvOp_t) \nabla P_t u \|_{\L^2(D)}^2 \ddt \lesssim \| u \|_{\W^{1,2}(D)}^2, \quad u \in \W^{1,2}(D).
		\end{align}
	\end{lemma}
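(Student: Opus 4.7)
The strategy is the classical principal-part approximation argument (as in \cite{Kato-Square-Root-Proof,Morris}), adapted to our setting where the extra smoothness in $P_tu$ is captured by Proposition~\ref{prop:coercivity}. For $f\in \L^2(D)^n$ and $Q\in \Box_t$, I would first unfold the definitions to get
\begin{align}
\chi_Q(\wt\Theta_t - \wt\gamma_t \AvOp_t)f = \chi_Q \wt\Theta_t\bigl(f - f_Q\bigr),
\end{align}
where $f_Q \coloneqq \fiint_Q f$ is interpreted as a constant $\C^n$-valued function on $D$. Then decompose $f - f_Q = (f-f_Q)\ind_{2Q} + \sum_{j\ge 1}(f-f_Q)\ind_{C_j(Q)}$ over dyadic annuli $C_j(Q)$ at distance $\simeq 2^j t$ from $Q$, and apply the off-diagonal estimates for $\wt\Theta_t$ (inherited from Lemma~\ref{lem:Theta_t_bounded}). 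Summing the geometric tails, controlling $|f_Q||C_j(Q)|^{1/2}$ by $2^{jn/2}\|f\|_{\L^2(Q)}$, and squaring gives the cubewise bound
\begin{align}
\|\chi_Q(\wt\Theta_t - \wt\gamma_t\AvOp_t)f\|_{\L^2(D)}^2 \lesssim \|f-f_Q\|_{\L^2(Q^*)}^2 + (\text{absolutely convergent tail}),
\end{align}
where $Q^*$ is a bounded enlargement. Summing over $Q\in\Box_t$ yields $\|(\wt\Theta_t - \wt\gamma_t\AvOp_t)f\|_{\L^2(D)}^2 \lesssim \sum_{Q\in\Box_t}\|f-f_Q\|_{\L^2(Q^*)}^2$.

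Next I would apply a fractional Poincaré inequality on cubes: for $\alpha\in(0,1]$ from Proposition~\ref{prop:coercivity},
\begin{align}
\|f - f_Q\|_{\L^2(Q^*)}^2 \lesssim t^{2\alpha}\|f\|_{\H^{\alpha,2}(Q^{**})}^2, \qquad Q\in\Box_t.
\end{align}
On $\R^n$ this is standard; on $D$ it follows via the Jones/BHT extension used in Lemma~\ref{lem:H=W}, applied to a Euclidean cube containing $Q^*$, together with property (iv) of the dyadic structure. Using finite overlap of the $Q^{**}$ to sum, this gives the key smoothness estimate
\begin{align}
\|(\wt\Theta_t - \wt\gamma_t\AvOp_t)f\|_{\L^2(D)}^2 \lesssim t^{2\alpha}\|f\|_{\H^{\alpha,2}(D)}^2.
\end{align}

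Specializing to $f = \nabla P_t u$ and combining the boundedness of $\nabla$ in the Bessel scale (as used in Lemma~\ref{lem:extrapolation_laplace}) with the coercivity estimate from Proposition~\ref{prop:coercivity},
\begin{align}
\|\nabla P_t u\|_{\H^{\alpha,2}(D)} \lesssim \|P_t u\|_{\H^{1+\alpha,2}(D)} \lesssim \|(S^*S)^{(1+\alpha)/2} P_t u\|_{\L^2(D)}.
\end{align}
Since $S^*S$ is self-adjoint, functional calculus gives $(S^*S)^{(1+\alpha)/2}P_t = (t^{-1})^\alpha (t^2 S^*S)^{\alpha/2}P_t (S^*S)^{1/2}$, so
\begin{align}
t^{2\alpha}\|\nabla P_t u\|_{\H^{\alpha,2}(D)}^2 \lesssim \|(t^2 S^*S)^{\alpha/2}P_t (S^*S)^{1/2}u\|_{\L^2(D)}^2.
\end{align}
Integrating in $\ddt$ on $(0,1)$, the right-hand side is a standard square function estimate for the self-adjoint $S^*S$ applied to $(S^*S)^{1/2}u$, controlled via the spectral theorem by $\|(S^*S)^{1/2}u\|_{\L^2(D)}^2 \approx \|u\|_{\W^{1,2}(D)}^2$ (Remark~\ref{rem:weak_Laplacian}).

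The main technical obstacle is the fractional Poincaré step on cubes $Q\in\Box_t$ that meet $\bd D$: the cubes are not Euclidean, and one must argue using the Jones extension to reduce to a fractional Poincaré on a Euclidean cube, then restrict back. This requires controlling the enlargement $Q^{**}$ so it remains inside a single Whitney-scale piece where the extension behaves well; property (v) of the dyadic structure (porosity) is what makes this work. The tail decay in the annular decomposition is the other point where one should be careful, but it follows from the exponential off-diagonal decay of $\wt\Theta_t$ paired with only polynomial growth of $|f_Q||C_j(Q)|^{1/2}$.
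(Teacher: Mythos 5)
Your Step~2 (passing from the $\theta = \alpha$ operator bound to the square function estimate via $\|\nabla P_t u\|_{\H^{\alpha,2}} \lesssim \|P_t u\|_{\H^{1+\alpha,2}}$, coercivity from Proposition~\ref{prop:coercivity}, commuting of fractional powers, and the spectral theorem) is essentially identical to the paper's Step~2, so there is nothing to add there. Where you diverge is in how you obtain the key intermediate estimate
\begin{align}
\|(\wt\Theta_t - \wt\gamma_t\AvOp_t)F\|_{\L^2(D)}^2 \lesssim t^{2\alpha}\|F\|_{\H^{\alpha,2}(D)^n}^2.
\end{align}
The paper proves it by \emph{interpolation}: it records the endpoint $\theta=1$ bound $\|(\wt\Theta_t - \wt\gamma_t\AvOp_t)F\|_{\L^2} \lesssim t\|F\|_{\W^{1,2}}$ (the classical integer-order Poincar\'e argument from~\cite[Lem.~7.8]{ISEM_Kato}, using $\L^2$ off-diagonal decay) and the endpoint $\theta=0$ bound $\lesssim \|F\|_{\L^2}$ (from Lemma~\ref{lem:ppa_uniform_bound} plus off-diagonal estimates), then interpolates the operator in the Bessel scale on $D$, which is permissible by~\cite[Prop.~3.8]{BE}. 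You instead go for the $\theta=\alpha$ bound directly via a fractional Poincar\'e on each dyadic cube, followed by a finite-overlap summation. This can be made to work, but you should be aware that the summation step $\sum_{Q\in\Box_t}\|f\|_{\H^{\alpha,2}(Q^{**})}^2 \lesssim \|f\|_{\H^{\alpha,2}(D)}^2$ is not automatic: the $\H^{\alpha,2}$ norm on a subdomain is defined via a quotient over extensions, so it is not additive over a finitely-overlapping family. To make it rigorous one should pass to a single global extension $\mathcal{E}f \in \H^{\alpha,2}(\R^n)$, replace the Bessel norm on $\R^n$ by the Gagliardo/Slobodeckij seminorm (equivalent for $0<\alpha<1$), and use that the Gagliardo double-integral is subadditive over a finitely-overlapping cover of $\R^n$. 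Once that is done, both your annular decomposition with off-diagonal decay and the boundary-cube issue via porosity are in order. In short: your route is valid but requires this extra technical care on localizing fractional norms, whereas the paper's interpolation argument avoids the issue entirely by only ever needing the integer-order Poincar\'e inequality at the endpoint $\theta=1$, which is the cleaner choice here.
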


	\begin{proof}
		We subdivide the proof into two steps.

		\textbf{Step 1}: we have $\| \wt \Theta_t - \wt \gamma_t \AvOp_t \|_{\cL(\H^{\theta,2}(D)^n, \L^2(D))} \lesssim t^\theta$ for $t \in (0,1]$ and $\theta \in [0,1]$.

		We argue by interpolation. To this end, let $t \in (0,1]$ and $F \in \W^{1,2}(D)^n$. We claim the bound
		\begin{align}
			\label{eq:smooth_ppa_1}
			\bigl\| (\wt \Theta_t - \wt \gamma_t \AvOp_t) F \bigr\|_{\L^2(D)} \lesssim t \| F \|_{\W^{1,2}(D)^n}.
		\end{align}
		Indeed, this follows from a standard argument using Poincaré's inequality, which uses in a crucial way the $\L^2(D)$ off-diagonal estimates for $\Theta_t$. For a version that applies to our geometric setting, see for instance~\cite[Lem.~7.8]{ISEM_Kato}.

		Using Lemma~\ref{lem:ppa_uniform_bound} and $\L^2(D)$ off-diagonal estimates for $\Theta_t$, we also obtain the bound
		\begin{align}
			\label{eq:smooth_ppa_2}
			\bigl\| (\wt \Theta_t - \wt \gamma_t \AvOp_t) F \bigr\|_{\L^2(D)} \lesssim \| F \|_{\L^2(D)^n}, \quad F \in \L^2(D)^n.
		\end{align}
		Then, interpolation between the bounds~\eqref{eq:smooth_ppa_1} and~\eqref{eq:smooth_ppa_2} yields the claim of this step.

		\textbf{Step 2}: conclusion using coercivity of $(S^*S)^\frac{1+\alpha}{2}$.

		Plug the bound from Step~1 with $\theta \coloneqq \alpha$ into the quadratic estimate, where $\alpha$ was introduced in Proposition~\ref{prop:coercivity}, to get
		\begin{align}
			\int_0^1 \| (\wt \Theta_t - \wt \gamma_t \AvOp_t) \nabla P_t u \|_{\L^2(D)}^2 \ddt \lesssim \int_0^1 t^{2\alpha} \| \nabla P_t u \|_{\H^{\alpha,2}(D)^n}^2 \ddt.
		\end{align}
		Using the mapping properties of $\nabla$ in the scale of Bessel potential spaces, we have $\| \nabla P_t u \|_{\H^{\alpha,2}(D)^n} \lesssim \| P_t u \|_{\H^{1+\alpha,2}(D)}$.
		Hence, Proposition~\ref{prop:coercivity} with $v \coloneqq P_t u = (1 + t^2 S^*S)^{-1} u \in \dom(S^*S)$ leads to
		\begin{align}
			\label{eq:qe5}
			\int_0^1 \| (\wt \Theta_t - \wt \gamma_t \AvOp_t) \nabla P_t u \|_{\L^2(D)}^2 \ddt &\lesssim \int_0^1 t^{2\alpha} \| (S^*S)^\frac{1+\alpha}{2} (1 + t^2 S^*S)^{-1} u \|_{\L^2(D)}^2 \ddt \\
			&= \int_0^1 \| (t^2 S^*S)^\frac{\alpha}{2} (1 + t^2 S^*S)^{-1} (S^*S)^\frac{1}{2} u \|_{\L^2(D)}^2 \ddt.
		\end{align}
		The expression on the right-hand side is dominated by a quadratic estimate for the operator $S^*S$, and is hence controlled by $\| (S^*S)^\frac{1}{2} u \|_{\L^2(D)}^2$.
		Finally, use the Kato square root property for the self-adjoint operator $S^*S$ to conclude.
	\end{proof}

	\subsection{Removing the smoothing operator}
	\label{subsec:remove_smoothing}

	So far, we have reduced~\eqref{eq:qe2} to the quadratic estimate
	\begin{align}
		\int_0^1 \| \wt \gamma_t \AvOp_t \nabla P_t u \|_{\L^2(D)}^2 \ddt \lesssim \| u \|_{\W^{1,2}(D)}^2, \quad u\in \W^{1,2}(D).
	\end{align}
	Next, we split
	\begin{align}
		\wt \gamma_t \AvOp_t \nabla P_t = \wt \gamma_t \AvOp_t \nabla (P_t - 1) + \wt \gamma_t \AvOp_t \nabla
	\end{align}
	to remove the smoothing operator $P_t$ from the quadratic estimate.

	\begin{lemma}[Removing the smoothing operator]
		\label{lem:remove_smoothing_operator}
		There holds the quadratic estimate
		\begin{align}
			\label{eq:qe7}
			\int_0^1 \| \wt \gamma_t \AvOp_t \nabla (P_t - 1) u \|_{\L^2(D)}^2 \ddt \lesssim \| u \|_{\W^{1,2}(D)}^2, \quad u \in \W^{1,2}(D).
		\end{align}
	\end{lemma}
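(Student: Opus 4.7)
The plan is to reduce the quadratic estimate to a spectral bound for the self-adjoint operator $S^*S$, combining the uniform $\L^2$-boundedness of the principal part from Lemma~\ref{lem:ppa_uniform_bound} with the Kato square root property for $S^*S$.

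First, applying Lemma~\ref{lem:ppa_uniform_bound} pointwise in $t \in (0,1]$ yields $\|\wt\gamma_t \AvOp_t \nabla (P_t - 1) u\|_{\L^2(D)} \lesssim \|\nabla (P_t - 1) u\|_{\L^2(D)^n}$. Since $(P_t - 1)u \in \W^{1,2}(D) = \dom((S^*S)^{1/2})$, the identity $\|\nabla w\|_{\L^2(D)} \leq \|S w\|_{\L^2(D)^n \oplus \L^q(\Gamma)} = \|(S^*S)^{1/2} w\|_{\L^2(D)}$ (which uses the square root property of the self-adjoint $S^*S$), combined with the commutativity of $(P_t - 1)$ with functions of $S^*S$ through the spectral theorem, gives
\begin{align}
\|\nabla(P_t - 1) u\|_{\L^2(D)^n} \leq \|(1 - P_t)(S^*S)^{1/2} u\|_{\L^2(D)}.
\end{align}

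Setting $v \coloneqq (S^*S)^{1/2} u$ and invoking $\|v\|_{\L^2(D)} \approx \|u\|_{\W^{1,2}(D)}$ from Step~1 of the proof of Proposition~\ref{prop:coercivity}, matters reduce to the spectral quadratic estimate
\begin{align}
\int_0^1 \|(1 - P_t) v\|_{\L^2(D)}^2 \ddt \lesssim \|v\|_{\L^2(D)}^2
\end{align}
for the self-adjoint, positive definite operator $S^*S$. This I would establish via the spectral theorem for $S^*S$, combined with the factorization $1 - P_t = t(S^*S)^{1/2} \cdot t(S^*S)^{1/2} P_t$ and the standard Littlewood--Paley-type quadratic estimate for the operator family $\{t(S^*S)^{1/2} P_t\}_{t > 0}$, which is a direct consequence of $\int_0^\infty |s/(1+s^2)|^2 \,ds/s < \infty$.

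The main obstacle is the last spectral quadratic estimate, where a naive bound via the symbol $\lambda/(1+\lambda)$ yields a logarithmic divergence at large $\lambda$; the cut-off $t \leq 1$ together with the factorization noted above is the key to absorbing this divergence.
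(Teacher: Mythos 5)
Your first two reductions are fine, but the third step breaks down: the quadratic estimate $\int_0^1 \|(1-P_t) v\|_{\L^2(D)}^2 \ddt \lesssim \|v\|_{\L^2(D)}^2$ that you ultimately reduce to is \emph{false}. By the spectral theorem it would require $\sup_{\lambda > 0}\int_0^1 \left(\tfrac{t^2\lambda}{1+t^2\lambda}\right)^2 \ddt < \infty$, but for $\lambda \geq 1$ the integrand is $\geq \tfrac14$ on $[\lambda^{-1/2},1]$, giving a lower bound $\gtrsim \log\lambda$. You correctly flag this logarithmic divergence at the end, but the factorization $1 - P_t = t(S^*S)^{1/2}\cdot t(S^*S)^{1/2}P_t$ does not absorb it: unwinding it, one would need to control $\|(S^*S)^{1/2}v\| = \|S^*S u\|$, which is not available for general $u \in \W^{1,2}(D) = \dom((S^*S)^{1/2})$; the integral $\int_0^1 \|t(S^*S)^{1/2}P_t[t(S^*S)^{1/2}v]\|^2\ddt$ is not a fixed-function quadratic estimate since the inner argument depends on $t$.

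The structural reason your reduction is too lossy is that invoking Lemma~\ref{lem:ppa_uniform_bound} pointwise in $t$ to bound $\|\wt\gamma_t \AvOp_t \nabla(P_t-1)u\|$ by $\|\nabla(P_t-1)u\|$ throws away the dyadic averaging $\AvOp_t$ entirely, and it is precisely the smoothing of $\AvOp_t$ at scale $t$ that rescues the estimate. The paper's argument keeps $\AvOp_t$ in play: it expands $u$ via the Calderón reproducing formula and runs a Schur test on $\AvOp_t s\nabla(P_t-1)P_s$. For $t \leq s$ a commutation identity gives decay $t/s$. For $s \leq t$ one piece gives $s/t$, and for the other piece one uses the interpolation inequality~\eqref{eq:interpolation_inequality} — which estimates $\bigl|\fint_Q \nabla w\bigr|$ in terms of both $\fint_Q |w|^2$ and $\fint_Q |\nabla w|^2$ with a favorable power of $t$ — to extract a gain $(s/t)^{\eta/2}$. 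This interpolation inequality is exactly the quantitative expression of the $\AvOp_t$-smoothing that your plan discards, and without a replacement for it the proof does not close.
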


	\begin{proof}
		By $\wt \gamma_t \AvOp_t = (\wt \gamma_t \AvOp_t) \AvOp_t$ and Lemma~\ref{lem:ppa_uniform_bound}, we can remove $\wt \gamma_t$ from the quadratic estimate.
		We start out with the reproducing identity
		\begin{align}
			\AvOp_t \nabla (P_t - 1) u = \int_0^\infty \AvOp_t \nabla (P_t - 1) s^2 S^*S (1 + s^2 S^*S)^{-2} u \dds, \quad u \in \W^{1,2}(D).
		\end{align}
		The proof relies on the usual Schur-type argument: provided
		\begin{align}
			\label{eq:schur}
			\| \AvOp_t s \nabla (P_t - 1) P_s \|_{\cL(\L^2(D))} \lesssim \eta(t/s), \quad s \in (0,\infty), t\in (0,1),
		\end{align}
		holds for a function $\eta \in \L^1((0,\infty), \ddt)$, then the claim follows from Young's convolution inequality
		\begin{align}
				&\int_0^1 \| \AvOp_t \nabla (P_t - 1) u \|_{\L^2(D)}^2 \ddt \\
				\lesssim{} &\left( \int_0^\infty |\eta(t)| \ddt \right)^2 \int_0^\infty \| (s^2 S^*S)^\frac{1}{2} (1 + s^2 S^*S)^{-1} (S^*S)^\frac{1}{2} u \|_{\L^2(D)}^2 \dds \\
				\lesssim{} &\| (S^*S)^\frac{1}{2} u \|_{\L^2(D)}^2 \\
				\approx{} &\| u \|_{\W^{1,2}(D)}^2.
		\end{align}
		Here, we used again the Kato square root property of $S^*S$ in the last step.

		So, it remains to establish~\eqref{eq:schur}. Let $f\in \L^2(D)$. First, consider $t \leq s$. Note that \begin{align}
			s \nabla (P_t - 1) P_s = \frac{t}{s} t\nabla P_t (P_s - 1).
		\end{align}
		Hence, using that the averaging operator is a contraction, calculate
		\begin{align}
			\| \AvOp_t s \nabla (P_t - 1) P_s f \|_{\L^2(D)} \leq \| s \nabla (P_t - 1) P_s f \|_{\L^2(D)^n}
			&= \frac{t}{s} \| t\nabla P_t (P_s - 1) f \|_{\L^2(D)^n} \\
			&\lesssim \frac{t}{s} \| f \|_{\L^2(D)}.
		\end{align}
		We also used uniform bounds for $(t \nabla P_t)_{t>0}$ here, which follow from ellipticity.
		Now, consider $s \leq t$. Split
		\begin{align}
			\label{eq:schur2}
			\| \AvOp_t s \nabla (P_t - 1) P_s f \|_{\L^2(D)} &\leq \| \AvOp_t s \nabla P_t P_s f \|_{\L^2(D)} + \| \AvOp_t s \nabla P_s f \|_{\L^2(D)} \\
			&\leq \| s \nabla P_t P_s f \|_{\L^2(D)} + \| \AvOp_t s \nabla P_s f \|_{\L^2(D)},
		\end{align}
		where we used again in the last step that the averaging operator is a contraction. For the first term on the right-hand side, we readily find
		\begin{align}
			\| s \nabla P_t P_s f \|_{\L^2(D)} \leq \frac{s}{t} \| t \nabla P_t P_s f \|_{\L^2(D)} \lesssim \frac{s}{t} \| f \|_{\L^2(D)}.
		\end{align}
		For the second term on the right-hand side of~\eqref{eq:schur2}, we recall the following interpolation inequality, see~\cite[Lem.~7.11]{ISEM_Kato} for a proof in our geometric setting:
		\begin{align}
			\label{eq:interpolation_inequality}
			\left| \fint_Q \nabla u \right|^2 \lesssim \frac{1}{t^\eta} \left( \fint_Q |u|^2 \right)^\frac{\eta}{2} \left( \fint_Q |\nabla u|^2 \right)^{1 - \frac{\eta}{2}}, \quad t\in (0,1], Q \in \Box_t, u \in \W^{1,2}(D).
		\end{align}
		Here, the number $\eta$ was introduced in Proposition~\ref{prop:cubes}.
		Also, observing that $\AvOp_t$ is a projection, we note
		\begin{align}
			\| \wt \gamma_t \AvOp_t s \nabla P_s f \|_{\L^2(D)} = \| \bigl( \wt \gamma_t \AvOp_t \bigr) \AvOp_t s \nabla P_s f \|_{\L^2(D)} \lesssim \| \AvOp_t s \nabla P_s f \|_{\L^2(D)}.
		\end{align}
		Hence, using the interpolation inequality and Hölder's inequality for sequence spaces, calculate
		\begin{align}
			\| \AvOp_t s \nabla P_s f \|_{\L^2(D)}^2 &= \sum_{Q \in \Box_t} |Q| \left| \fint_Q s \nabla P_s f \right|^2 \\
			&\lesssim \frac{1}{t^\eta} \sum_{Q \in \Box_t} |Q| \left( \fint_Q |s P_s f|^2 \right)^\frac{\eta}{2} \left( \fint_Q |s \nabla P_s f|^2 \right)^{1-\frac{\eta}{2}} \\
			&= \left( \frac{s}{t} \right)^\eta \sum_{Q \in \Box_t} \left( \int_Q |P_s f|^2 \right)^\frac{\eta}{2} \left( \int_Q |s\nabla P_s f|^2 \right)^{1 - \frac{\eta}{2}} \\
			&\leq \left( \frac{s}{t} \right)^\eta \| P_s f \|_{\L^2(D)}^\eta \| s \nabla P_s \|_{\L^2(D)}^{2 - \eta} \\
			&\lesssim \left( \frac{s}{t} \right)^\eta \| f \|_{\L^2(D)}^2.
		\end{align}
		In summary, we have established~\eqref{eq:schur2} with $\eta(\tau) \coloneqq C \min(\tau, \tau^{-1} + \tau^{-\eta})$ for a suitable absolute constant $C$. This completes the proof.
	\end{proof}

	\subsection{Full principal part approximation and reduction to a Carleson measure property}
	\label{subsec:ppa_conclusion}

	Combining all reduction steps so far, we have reduced~\eqref{eq:qe2} to the quadratic estimate
	\begin{align}
		\label{eq:qe6}
		\int_0^1 \| \wt \gamma_t \AvOp_t \nabla u \|_{\L^2(D)}^2 \ddt \lesssim \| u \|_{\W^{1,2}(D)}^2, \quad u \in \W^{1,2}(D).
	\end{align}
	We can bound the left-hand side as follows:
	\begin{align}
		\int_0^1 \| \wt \gamma_t \AvOp_t \nabla u \|_{\L^2(D)}^2 \ddt \leq \int_0^1 \iint_D |\AvOp_t \nabla u|^2  |\wt \gamma_t|^2 \ddxt.
	\end{align}
	Hence,~\eqref{eq:qe6} follows from the Carleson embedding (Proposition~\ref{prop:carleson_embedding}) provided we can show that the measure $\mu(t,x) \coloneqq |\wt \gamma_t|^2 \, \tfrac{\mathrm{d} x \mathrm{d} t}{t}$ is a Carleson measure on $D$.

	We record the full principal part approximation in the interior for later use.

	\begin{proposition}[Principle part approximation]
		\label{prop:ppa}
		There holds the quadratic estimate
		\begin{align}
			\int_0^\infty \| (\wt\Theta_t - \wt \gamma_t \AvOp_t) \nabla u \|_{\L^2(D)}^2 \ddt \lesssim \| u \|_{\W^{1,2}(D)}^2, \quad u \in \W^{1,2}(D).
		\end{align}
	\end{proposition}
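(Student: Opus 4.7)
My plan is to combine Lemmas~\ref{lem:smooth_ppa_volume} and~\ref{lem:remove_smoothing_operator} with a Schur-type argument, and to handle the range $t > 1$ separately. I decompose
\begin{align*}
(\wt\Theta_t - \wt\gamma_t\AvOp_t)\nabla u
= (\wt\Theta_t - \wt\gamma_t\AvOp_t)\nabla P_t u
+ \wt\Theta_t \nabla(1-P_t) u
- \wt\gamma_t\AvOp_t \nabla(1-P_t) u.
\end{align*}
On $(0,1]$, the first term is controlled by Lemma~\ref{lem:smooth_ppa_volume}, and the third term, rewritten via $\nabla(1-P_t) = -\nabla(P_t-1)$, is controlled by Lemma~\ref{lem:remove_smoothing_operator}.

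The middle term $\wt\Theta_t\nabla(1-P_t)u$ requires a dedicated Schur estimate. I would use the Calderón reproducing identity $u = 2\int_0^\infty (1-P_s)P_s u \dds$ to write
\begin{align*}
\wt\Theta_t\nabla(1-P_t)u = 2\int_0^\infty \wt\Theta_t\nabla(1-P_t)(1-P_s)P_s u \dds
\end{align*}
and to establish a bound $\|\wt\Theta_t\nabla(1-P_t)P_s\|_{\cL(\L^2(D))} \lesssim \eta(t/s)$ with $\eta \in \L^1(d\tau/\tau)$. The kernel decay comes from spectral calculus for the self-adjoint operator $S^*S$: in the regime $t \leq s$, the cancellation in $(1-P_t)$ at the spectral scale $\sim 1/t^2$ combines with the frequency localization of $P_s$ to produce a gain of $(t/s)^\gamma$; in the regime $t \geq s$, it is the resolvent $P_s$ together with the uniform $\L^2$-boundedness of $\wt\Theta_t$ that forces decay of size $(s/t)^\delta$. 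The Kato property of $S^*S$ (applied to $(S^*S)^{1/2}u$) then closes the resulting quadratic estimate via Young's convolution inequality.

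For the range $t \in (1,\infty)$, I would use the factorization
\begin{align*}
\wt\Theta_t = -t^{-1}(t^2\cL)(1+t^2\cL)^{-1} \cL^{-1} \Div A,
\end{align*}
which, together with the uniform $\L^2$-boundedness of $(t^2\cL)(1+t^2\cL)^{-1}$ and the coercivity bound $\|\cL^{-1}\Div A\|_{\L^2(D)^n \to \L^2(D)} \lesssim \|A\|_\infty$, yields $\|\wt\Theta_t\|_{\cL(\L^2(D)^n, \L^2(D))} \lesssim t^{-1}$. Under the natural (e.g.\ trivial) extension of $\wt\gamma_t\AvOp_t$ beyond scale one, this gives $\int_1^\infty \|(\wt\Theta_t - \wt\gamma_t\AvOp_t)\nabla u\|_{\L^2}^2 \ddt \lesssim \|u\|_{\L^2}^2 \lesssim \|u\|_{\W^{1,2}}^2$.

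The main obstacle is the Schur bound for the middle term. The principal-part smoothing $\|\cdot\|_{\H^{\theta,2}(D)^n \to \L^2(D)} \lesssim t^\theta$ from Step~1 of Lemma~\ref{lem:smooth_ppa_volume} is unavailable here since we have $\wt\Theta_t$ in isolation. The decay must be manufactured entirely from the factor $(1-P_t)$ and the reproducing kernel $(1-P_s)P_s$, which requires a careful symbol calculation on $S^*S$ that produces a kernel integrable in $\log(t/s)$ and sufficiently decaying at both ends.
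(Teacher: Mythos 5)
Your decomposition into $\I + \II + \III$ is identical to the paper's, and your treatment of $\I$ (via Lemma~\ref{lem:smooth_ppa_volume}) and $\III$ (via Lemma~\ref{lem:remove_smoothing_operator}) matches. Your handling of $t > 1$ via $\|\wt\Theta_t\|_{\cL(\L^2(D)^n,\L^2(D))}\lesssim t^{-1}$ is also sound and in fact covers a range that the two cited lemmas, which are stated on $(0,1]$, do not.

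The obstacle you flag in the Schur argument for $\II=\wt\Theta_t\nabla(1-P_t)$ is, however, genuine and not a computation waiting to be carried out. For $t\ge s$, the contribution of $(1-P_t)(1-P_s)P_s$ in the functional calculus of $S^*S$ --- after the $(S^*S)^{-1/2}$-normalisation required because the target norm is $\|\cdot\|_{\W^{1,2}(D)}$ --- has symbol
\begin{align}
\frac{t^2\lambda}{1+t^2\lambda}\cdot\frac{s^2\lambda}{(1+s^2\lambda)^2},
\end{align}
which is of order one at the bandpass $\lambda\sim s^{-2}$ for every $t\ge s$. Since $\wt\Theta_t$ in isolation is merely uniformly $\L^2$-bounded (the gain $\lesssim t^\theta$ in $\cL(\H^{\theta,2}(D)^n,\L^2(D))$ that you correctly identify as needed belongs only to the cancellation $\wt\Theta_t - \wt\gamma_t\AvOp_t$), the Schur kernel $\eta(t/s)$ does not decay as $t/s\to\infty$, and the argument cannot close.

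The paper handles $\II$ with no Schur estimate at all. Using $-\Div A\nabla = \cL - \Tr^*b\Tr$ from the factorization $\cL=S^*BS$, one has
\begin{align}
\wt\Theta_t\nabla(1-P_t) = \Theta_t S(1-P_t) - t(1+t^2\cL)^{-1}\Tr^*b\Tr(1-P_t),
\end{align}
and the quadratic estimate for the first operator is exactly Lemma~\ref{lem:smoothing}. The boundary correction is non-singular: as in Lemma~\ref{lem:volume}, $t(1+t^2\cL)^{-1}\Tr^*b\Tr$ maps $\H^{s,2}(D)$ to $\L^2(D)$ with norm $\lesssim t^{1-s}$ for $t\le 1$ (and decays for $t>1$ since $\cL$ is invertible), while $(1-P_t)$ is uniformly bounded $\W^{1,2}(D)\to\H^{s,2}(D)$, so its square function converges. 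It is this algebraic identity relating $\wt\Theta_t\nabla$ to $\Theta_t S$, rather than any intrinsic decay of $\wt\Theta_t$, that is the missing ingredient in your proposal.
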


	\begin{proof}
		Write
		\begin{align}
			(\wt \Theta_t - \wt \gamma_t \AvOp_t) \nabla = (\wt \Theta_t - \wt \gamma_t \AvOp_t) \nabla P_t + \wt \Theta_t \nabla (1 - P_t) - \wt \gamma_t \AvOp_t \nabla (1 - P_t) \eqqcolon \I + \II + \III.
		\end{align}
		The quadratic estimate corresponding to $\I$ was established in Lemma~\ref{lem:smooth_ppa_volume}. As for $\II$, $\| \wt \Theta_t \nabla (1 - P_t) \|_{\cL(\L^2(D))} \leq \| \Theta_t S (1 - P_t) \|_{\cL(\L^2(D))}$, so we can invoke Lemma~\ref{lem:smoothing}.
		Finally, the quadratic estimate for $\III$ was just established in Lemma~\ref{lem:remove_smoothing_operator}.
	\end{proof}

	\section{The Carleson measure estimate}
	\label{sec:carleson}

	The goal of this section is to show that the measure $\mu \coloneqq \| \wt \gamma_t \|^2 \ddxt$ is a Carleson measure on $D$. Owing to the reductions performed in the last section, this will imply our main result (Theorem~\ref{thm:main}).

	The key step is the construction of Tb--type test functions in the subsequent proposition. Since $\wt \gamma_t$ is the principal part of $\wt \Theta_t$, an operator acting only in the interior, the argument follow closely the case with Neumann boundary conditions. Therefore, we will keep this section rather short and refer to the literature for arguments where the Neumann case applies verbatim.

	\begin{proposition}[Tb--type test functions]
		There is a constant $\eps_0 \in (0,1)$ such that, for all $0 < \eps < \eps_0$, all unit vectors $\xi \in \C^n$ and each cube $Q \in \Box$, there is a test function $b = b^{\xi,Q,\eps} \in \L^2(D)^n$ such that the following hold:
		\begin{enumerate}
			\item[$\mathrm{(a)}$] $\| b \|_{\L^2(D)}^2 \lesssim |Q|$,
			\item[$\mathrm{(b)}$] $\Re \left( \xi \cdot \fint_Q b \right) \geq 1$,
			\item[$\mathrm{(c)}$] $\iiint_{R(Q)} |\wt \gamma_t(x) (\AvOp_t b)(x)|^2 \ddxt \lesssim \frac{|Q|}{\eps^2}$.
		\end{enumerate}
	\end{proposition}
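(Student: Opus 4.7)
The plan is to adapt the standard $T(b)$-style test-function construction from the Kato programme (used for Neumann boundary conditions in for instance~\cite{Kato-Square-Root-Proof, ISEM_Kato}) to the present Robin setting. Set $\tau \coloneqq \eps \ell(Q)$, fix a smooth cutoff $\eta_Q$ with $\eta_Q \equiv 1$ on $2Q$, $\supp \eta_Q \subseteq 4Q \cap D$, and $\|\nabla \eta_Q\|_\infty \lesssim \ell(Q)^{-1}$, and denote the centre of $Q$ by $x_Q$. The test function will be
\begin{align*}
	b^{\xi,Q,\eps} \coloneqq \nabla f, \qquad f \coloneqq (1 + \tau^2 \cL)^{-1} w, \qquad w(x) \coloneqq \eta_Q(x)\bigl(\xi \cdot (x - x_Q)\bigr),
\end{align*}
which is well defined by invertibility of $\cL$.

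I would verify (a) by testing the resolvent equation $f + \tau^2 \cL f = w$ against $f$, taking real parts, and applying~\eqref{eq:garding}, together with the crude size bound $\|w\|_{\L^2(D)}^2 \lesssim \ell(Q)^2 |Q|$. Property (b) then follows by decomposing $\nabla f = \nabla w + \nabla(f - w)$; on $Q$ one has $\nabla w = \xi$ since $\eta_Q \equiv 1$, which reduces matters to showing that $\fint_Q \nabla(f - w)$ is small. Using $f - w = -\tau^2 \cL f$ (interpreted in the dual scale) and a Caccioppoli-type argument, this average is of order $\eps^\alpha$ for some $\alpha > 0$, so choosing $\eps_0$ sufficiently small (and rescaling $b$ by a harmless multiplicative constant if necessary) secures (b).

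The heart of the proposition is (c). By Proposition~\ref{prop:ppa} applied to the gradient $b = \nabla f$,
\begin{align*}
	\iiint_{R(Q)} |\wt \gamma_t \AvOp_t b|^2 \ddxt \lesssim \int_0^{\ell(Q)} \|\wt \Theta_t b\|_{\L^2(D)}^2 \ddt + \|f\|_{\W^{1,2}(D)}^2,
\end{align*}
and the $\W^{1,2}$ term is dominated by the target bound via (a). To handle the first piece, the key computation is: starting from $\wt \Theta_t b = -t(1 + t^2 \cL)^{-1} \Div(A \nabla f)$, using the weak factorization $\cL f = -\Div(A \nabla f) + \Tr^* b\, \Tr(f)$ (coming from $\cL = S^* B S$), and substituting the resolvent identity $\cL f = \tau^{-2}(w - f)$, one obtains
\begin{align*}
	\wt \Theta_t b = \tfrac{t}{\tau^2}(1 + t^2 \cL)^{-1}(w - f) \;-\; t(1 + t^2 \cL)^{-1} \Tr^* b\, \Tr(f).
\end{align*}
For the interior term, contractivity of $(1 + t^2 \cL)^{-1}$ on $\L^2(D)$ and $\|w - f\|_{\L^2(D)} \lesssim \ell(Q)|Q|^{1/2}$ produce a pointwise-in-$t$ bound of order $(t/\tau^2)\ell(Q)|Q|^{1/2}$; squaring and integrating against $\ddt$ over $(0, \ell(Q)]$ yields exactly $|Q|/\eps^2$.

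The main obstacle, and the genuinely new feature relative to the Neumann case, is the Robin boundary term in this decomposition. I would dispatch it along the lines of Lemma~\ref{lem:volume}: the trace-resolvent smoothing estimate $\|t(1 + t^2 \cL)^{-1} \Tr^*\|_{\L^{q'}(\Gamma) \to \L^2(D)} \lesssim t^{1 - s}$ combined with the H\"older--trace bound $\|b\, \Tr(f)\|_{\L^{q'}(\Gamma)} \lesssim \|f\|_{\H^{s,2}(D)} \lesssim \|f\|_{\W^{1,2}(D)}$, together with $1 - s > 0$, yields a non-singular $t$-integral of order at most $|Q|/\eps^2$, matching the interior contribution and completing (c).
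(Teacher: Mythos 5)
Your construction is the same as the paper's: set $w$ to be a cutoff of the linear function $\bar\xi\cdot(x - x_Q)$, put $f \coloneqq (1+\tau^2\cL)^{-1}w$ and $b \coloneqq \nabla f$ (the paper uses $2\nabla f$ to land directly at the lower bound $\geq 1$ in~(b)), verify~(a) by resolvent/energy bounds, (b) by the interpolation inequality~\eqref{eq:interpolation_inequality}, and~(c) via Proposition~\ref{prop:ppa} together with a direct bound on $\|\wt\Theta_t b\|_{\L^2(D)}$. Your observation that the decomposition $-\Div(A\nabla f) = \cL f - \Tr^* b\,\Tr(f)$ produces a genuine Robin boundary term in the expansion of $\wt\Theta_t b$ is a good one, and your proposed way of absorbing it via the trace--resolvent smoothing from Lemma~\ref{lem:volume} and $1-s>0$ is sound.

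However, the estimates you give for~(a) and for the interior piece of~(c) are cruder than what is needed and do not produce the stated bounds. For~(a), testing the resolvent equation and using~\eqref{eq:garding} gives only $\tau^2\|f\|_{\W^{1,2}(D)}^2 \lesssim \|w\|_{\L^2(D)}^2 \lesssim \ell^2|Q|$, hence $\|b\|_{\L^2(D)}^2 \lesssim |Q|/\eps^2$, not $\lesssim|Q|$ as asserted. For~(c), the crude bounds $\|w-f\|_{\L^2(D)} \lesssim \ell|Q|^{1/2}$ and $\|(1+t^2\cL)^{-1}\|_{\L^2\to\L^2}\lesssim 1$, divided through by $\tau^2 = \eps^2\ell^2$, give $\|\wt\Theta_t b\| \lesssim \tfrac{t}{\tau^2}\ell|Q|^{1/2}$, and
\begin{align}
\int_0^{\ell}\frac{t^2}{\tau^4}\,\ell^2|Q|\ddt = \frac{\ell^4|Q|}{2\tau^4} = \frac{|Q|}{2\eps^4},
\end{align}
so you obtain $|Q|/\eps^4$, not $|Q|/\eps^2$ as you claim. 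The paper is sharper at exactly this point: instead of estimating $\cL f$ through the brute-force $\tau^{-2}\|w - f\|$, it uses the Caccioppoli-type resolvent smoothing $\|\tau(1+\tau^2\cL)^{-1}\Div\|_{\L^2(D)^n\to\L^2(D)}\lesssim 1$ applied to $A\nabla w\in\L^\infty(D)^n$. The identity $b - \nabla w = \tau^2\nabla(1+\tau^2\cL)^{-1}\Div(A\nabla w)$ then gives $\|b - \nabla w\|_{\L^2(D)}\lesssim\|A\nabla w\|_{\L^2(D)}\lesssim|Q|^{1/2}$ with no $\eps$-loss, which is~(a), and for~(c) the corresponding expansion yields $\|\wt\Theta_t b\|\lesssim \tfrac{t}{\tau}|Q|^{1/2}$, integrating up to $|Q|/\eps^2$. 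To fix your argument, replace your raw estimate of $\|w-f\|$ by $\|w-f\|_{\L^2}=\tau^2\|Lf\|_{\L^2}\lesssim\tau^2\cdot\tau^{-1}\|A\nabla w\|_{\L^2}\lesssim\tau|Q|^{1/2}$ using that same smoothing bound. (This looseness would not derail the downstream stopping-time argument, since there the dependence of the Carleson constant on $\eps$ is immaterial once $\eps$ is fixed, but it does mean your computation does not establish~(a) and~(c) with the stated exponents.)

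Two smaller points: for complex $\xi$ you need the linear profile $\bar\xi\cdot(x-x_Q)$, not $\xi\cdot(x-x_Q)$, so that $\Re(\xi\cdot\nabla w)=|\xi|^2=1$ on $Q$; and your sketch of~(b) should spell out the use of~\eqref{eq:interpolation_inequality} (which gives the exponent $\eps^\eta$ with $\eta$ from Proposition~\ref{prop:cubes}), since a bare Caccioppoli argument only controls $\fint_Q|\nabla(f-w)|^2$ and not the average $\bigl|\fint_Q\nabla(f-w)\bigr|$.
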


	\begin{proof}
		Let $\xi \in \C^n$ be a unit vector, $\eps \in (0,1)$ and $Q \in \Box$. Write $\ell$ for the sidelength of $Q$. Fix a cutoff function $\eta \in \Cont_0^\infty(2Q)$ satisfying $\eta \equiv 1$ on $Q$ and $\| \eta \|_\infty + \ell \| \nabla \eta \|_\infty \leq C$, where $C$ is a dimensional constant, and set $\Phi(x) \coloneqq \eta(x)(x-x_Q) \cdot \overline{\xi}$.
		The important properties of $\Phi$ are
		\begin{align}
			\label{eq:Phi_props}
			\Phi \in \W^{1,2}(D), \quad |\nabla \Phi(x)| \lesssim \ind_{2Q}(x), \quad \nabla \Phi = \overline{\xi} \text{ on } Q.
		\end{align}
		Now, define the test function $b = b^{\xi,Q,\eps}$ by
		\begin{align}
			b \coloneqq 2\nabla (1 + (\eps \ell)^2 L)^{-1} \Phi.
		\end{align}
		Observe the identity
		\begin{align}
			\label{eq:diff}
			\frac{1}{2} b - \nabla \Phi = -(\eps\ell)^2 \nabla \bigl(1 + (\eps \ell)^2 \cL \bigr)^{-1} \cL \Phi = (\eps\ell)^2 \nabla \bigl(1 + (\eps \ell)^2 \cL \bigr)^{-1} \Div(A \nabla \Phi).
		\end{align}

		To obtain~(a), use the second property in~\eqref{eq:Phi_props} and~\eqref{eq:diff} to give
		\begin{align}
			\| b \|_{\L^2(D)}^2 \lesssim \| b - 2\nabla \Phi \|_{\L^2(D)}^2 + \| \nabla \Phi \|_{\L^2(D)}^2 \lesssim \| A \nabla \Phi \|_{\L^2(D)}^2 + \| \nabla \Phi \|_{\L^2(D)}^2 \lesssim |Q|,
		\end{align}
		with implicit constants independent of $\eps$ and $\ell$. Note that we also used uniform bounds for the family $(t^2 \nabla (1 + t^2 \cL)^{-1} \Div )_{t>0}$ here, see~\cite[Proof of Lem.~2.2]{Kato-Square-Root-Proof}.

		For~(b), use the third property in~\eqref{eq:Phi_props} to obtain
		\begin{align}
			\fiint_Q b - 2\overline{\xi} \d x = \fint_Q b - 2\nabla \Phi \d x = 2(\eps\ell)^2 \fint_Q \nabla u,
		\end{align}
		where $u \coloneqq \bigl( 1 + (\eps \ell)^2 \cL \bigr)^{-1} \Div(A \Phi)$. By uniform boundedness of$(t^2 \nabla (1 + t^2 \cL)^{-1} \Div )_{t>0}$ and $(t(1 + t^2 \cL)^{-1} \Div )_{t>0}$ in conjunction with the second property in~\eqref{eq:Phi_props}, we find
		\begin{align}
			\| u \|_{\L^2(D)} \lesssim \frac{1}{\eps \ell} |Q|^{\frac{1}{2}}, \quad \| \nabla u \|_{\L^2(D)} \lesssim |Q|^{\frac{1}{2}}.
		\end{align}
		Therefore,~\eqref{eq:interpolation_inequality} yields
		\begin{align}
			\label{eq:diff2}
			\left| \fiint_Q b - 2\overline{\xi} \d x \right|^2 &\lesssim \eps^4 \ell^{4 - \eta} \left( \fiint_Q |u|^2 \right)^\frac{\eta}{2} \left( \fiint_Q |\nabla u|^2 \right)^{1 - \frac{\eta}{2}} \lesssim \eps^\eta.
		\end{align}
		Consequently, since $\xi$ is a unit vector,
		\begin{align}
			\Re \left( \xi \cdot \fiint_Q b \d x \right) = 2 + \Re \left( \xi \cdot \fiint_Q b - 2\overline{\xi} \d x \right) \geq 2 - \sqrt{C \eps^\eta},
		\end{align}
		where $C$ is the implicit constant in~\eqref{eq:diff2}. Taking $\eps$ small enough so that $\sqrt{C \eps^\eta} \leq 1$ concludes the proof of~(b).

		As for (c), use Proposition~\ref{prop:ppa} with $F = b$ to give
		\begin{align}
			\frac{1}{2} \iiint_{R(Q)} |\wt \gamma_t(x) (\AvOp_t b)(x)|^2 \ddxt &\leq \int_0^\ell \Bigl( \| (\wt \gamma_t \AvOp_t - \wt \Theta_t) b \|_{\L^2(D)}^2 + \| \wt \Theta_t b \|_{\L^2(D)}^2 \Bigr) \ddt \\
			&\leq C \| b \|_{\L^2(D)}^2 + \int_0^\ell \| \wt \Theta_t b \|_{\L^2(D)}^2 \ddt \\
			&\leq C |Q| + \int_0^\ell \| \wt \Theta_t b \|_{\L^2(D)}^2 \ddt,
		\end{align}
		where we used (a) in the last step. To control the last term, expand
		\begin{align}
			\wt \Theta_t b &= -2t(1 + t^2 \cL)^{-1} \Div\bigl(A\nabla (1 + \eps^2 \ell^2 L)^{-1} \Phi \bigr) \\
			&= 2t(1 + t^2 \cL)^{-1} \cL (1 + \eps^2 \ell^2 L)^{-1} \Phi \\
			&= -2t(1 + t^2 L)^{-1} \bigl( (1 + \eps^2 \ell^2 \cL)^{-1} \Div \bigr) A \nabla \Phi.
		\end{align}
		Therefore, we obtain
		\begin{align}
			\| \wt \Theta_t b \|_{\L^2(D)}^2 \lesssim \frac{t^2}{\eps^2 \ell^2} \| A \nabla \Phi \|_{\L^2(D)}^2 \lesssim \frac{t^2}{\eps^2 \ell^2} |Q|,
		\end{align}
		where the bound $\| \nabla \Phi \|_{\L^2(D)}^2 \lesssim |Q|$ used in the last step follows from~\eqref{eq:Phi_props}.
		Hence, we find
		\begin{align}
			\int_0^\ell \| \wt \Theta_t b \|_{\L^2(D)}^2 \ddt \leq \frac{C}{\eps^2} |Q|
		\end{align}
		as desired.
	\end{proof}

	Having the test functions at hand, the Carleson measure estimate follows from the usual procedure using sectorial decomposition of $\C^n$, a stopping time argument, and the John--Nirenberg lemma for Carleson measures as in~\cite{Kato-Square-Root-Proof}. For a presentation with the same choice of test functions as above, see~\cite[Prop.~14.5, Lem.~14.8, Lem.~14.10]{ISEM}.

	\begin{proposition}[Carleson measure]
		\label{prop:carleson_measure}
		The measure $\mu = |\wt \gamma_t(x)|^2 \ddxt$ is a Carleson measure on $D$.
	\end{proposition}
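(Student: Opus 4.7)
The plan is to carry out the classical $Tb$-argument à la~\cite{Kato-Square-Root-Proof}, with the test functions $b^{\xi,Q,\eps}$ from the previous proposition as inputs. Since $\wt \gamma_t$ is the principal part of an operator acting only in the interior of $D$, no genuinely boundary-flavoured complication should appear at this stage: Lemma~\ref{lem:volume} has already absorbed the boundary contribution.

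First I would cover $\C^n \setminus \{0\}$ by finitely many open cones $K_\nu$ centered at unit vectors $\xi_\nu$ chosen so that $|w| \leq 2\,\Re(\overline{\xi_\nu} \cdot w)$ for every $w \in K_\nu$. By the trivial splitting $\wt \gamma_t = \sum_\nu \wt \gamma_t \ind_{K_\nu}(\wt \gamma_t)$, it suffices to bound
\begin{align}
\iiint_{R(Q)} |\wt \gamma_t(x)|^2 \ind_{K_\nu}(\wt \gamma_t(x)) \ddxt \lesssim |Q|, \quad Q \in \Box,
\end{align}
uniformly in $Q$ for each fixed $\nu$.

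Second I would fix $\eps \in (0,\eps_0)$, a cone index $\nu$, and for each $Q \in \Box$ select the test function $b \coloneqq b^{\xi_\nu,Q,\eps}$ and perform a stopping-time decomposition. Let $\mathcal{F}(Q)$ be the collection of maximal dyadic subcubes $Q' \subsetneq Q$ in which either $|\fiint_{Q'} b| > M$ or $\Re(\xi_\nu \cdot \fiint_{Q'} b) < \tfrac{1}{2}$, for a sufficiently large constant $M$ depending only on the structural data. Combining the Chebyshev inequality with property~(a) for the first bad event and property~(b) for the second shows $\bigl| \bigcup_{Q' \in \mathcal{F}(Q)} Q' \bigr| \leq (1-\delta) |Q|$ for some $\delta > 0$ independent of $Q$. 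On the good part $E(Q) \coloneqq R(Q) \setminus \bigcup_{Q' \in \mathcal{F}(Q)} R(Q')$ of the Carleson box, the averages $\AvOp_t b$ remain within $\{|w| \leq M\} \cap \{\Re(\xi_\nu \cdot w) \geq \tfrac{1}{2}\}$, so the cone condition yields the pointwise estimate $|\wt \gamma_t(x)|^2 \lesssim |\wt \gamma_t(x) (\AvOp_t b)(x)|^2$ whenever $\wt \gamma_t(x) \in K_\nu$. Hence property~(c) gives
\begin{align}
\iiint_{E(Q)} |\wt \gamma_t|^2 \ind_{K_\nu}(\wt \gamma_t) \ddxt \lesssim \iiint_{R(Q)} |\wt \gamma_t \AvOp_t b|^2 \ddxt \lesssim \eps^{-2} |Q|.
\end{align}

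Finally, the John--Nirenberg lemma for Carleson measures (see e.g.~\cite[Lem.~14.10]{ISEM}) upgrades the local bound on $E(Q)$, combined with the smallness $|\bigcup \mathcal{F}(Q)| \leq (1-\delta)|Q|$ of the stopped region, to the uniform Carleson bound $\mu(R(Q)) \lesssim |Q|$ for all $Q \in \Box$. The step I expect to be the most technical is verifying that the stopping-time decomposition and the John--Nirenberg lemma function correctly in the dyadic structure of Proposition~\ref{prop:cubes}; this rests on property~(iv) (cubes are trapped between Euclidean cubes intersected with $D$, so ratios of sidelengths control ratios of measures via interior thickness) and property~(v) (the thin-layer estimate, essentially a porosity-type bound) to guarantee that boundary interactions of stopping cubes with $\partial D$ do not ruin the measure estimates. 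Since the relevant arguments are already available for the Neumann case in~\cite[Prop.~14.5, Lem.~14.8, Lem.~14.10]{ISEM} in exactly the geometric setting considered here, I expect these steps to go through verbatim.
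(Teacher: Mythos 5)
Your proposal is correct and follows exactly the procedure the paper invokes: sectorial (cone) decomposition of $\C^n$, a stopping-time argument using properties (a)--(c) of the test functions, and the John--Nirenberg lemma for Carleson measures, with the same references to~\cite{Kato-Square-Root-Proof} and~\cite[Prop.~14.5, Lem.~14.8, Lem.~14.10]{ISEM}. The paper simply delegates these steps to the literature, so your more detailed sketch (including the role of properties (iv) and (v) of the dyadic structure) is a faithful expansion of the intended argument.
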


	By assembling all reduction steps, we conclude Theorem~\ref{thm:main}.

	\begin{proof}[Proof of Theorem~\ref{thm:main}]
		As seen in Section~\ref{subsec:equivalence_qe}, Theorem~\ref{thm:main} follows from the quadratic estimate
		\begin{align}
			\int_0^\infty \| \Theta_t Su \|_{\L^2(D)}^2 \ddt \lesssim \| u \|_{\W^{1,2}(D)}^2, \qquad u \in \W^{1,2}(D).
		\end{align}
		Let $u \in \W^{1,2}(D)$. Using the reductions from Section~\ref{sec:reduction} (Lemma~\ref{lem:smoothing}, Lemma~\ref{lem:finite_scales}, Lemma~\ref{lem:volume}, Lemma~\ref{lem:smooth_ppa_volume}, and Lemma~\ref{lem:remove_smoothing_operator}) and Proposition~\ref{prop:carleson_measure} in conjunction with Proposition~\ref{prop:carleson_embedding}, calculate
		\begin{align}
			\int_0^\infty \| \Theta_t Su \|_{\L^2(D)}^2 \ddt
			\lesssim{} \| u \|_{\W^{1,2}(D)}^2 + \int_0^1 \iint_D |(\AvOp_t \nabla u)(x)|^2 \d \mu(t,x)
			\lesssim{} \| u \|_{\W^{1,2}(D)}^2
		\end{align}
		as desired.
	\end{proof}

\end{document}